\renewcommand {\epsilon}{\varepsilon}
\newcommand{\EE}{\mathbb{E}}
\newcommand{\NN}{\mathbb{N}}
\newcommand{\PP}{\mathbb{P}}
\newcommand{\RR}{\mathbb{R}}
\newcommand{\bB}{\mathcal{B}}
\newcommand{\cC}{\mathcal{C}}
\newcommand{\fF}{\mathcal{F}}
\newcommand{\ffF}{\mathfrak{F}}
\newcommand{\fK}{\mathfrak{K}}
\newcommand{\fM}{\mathfrak{M}}
\newcommand{\e}{\varepsilon}
\newcommand{\la}{\lambda}
\newcommand{\si}{\sigma}
\newcommand{\Om}{\Omega}
\newcommand{\mto}{\mapsto}
\newcommand{\ra}{\rightarrow}
\newcommand{\ti}{\tilde}
\newcommand{\lgl}{\ensuremath{\langle}}
\newcommand{\rgl}{\ensuremath{\rangle}}
\newcommand{\lqq}{\leqslant}
\newcommand{\gqq}{\geqslant}
\newtheorem{thm}{Theorem}[section]
\newtheorem{prp}[thm]{Proposition}
\newtheorem{cor}[thm]{Corollary}
\newtheorem{lem}[thm]{Lemma}
\newtheorem{rem}[thm]{Remark}
\DeclareMathSymbol{\ophi}{\mathalpha}{letters}{"1E}
\renewcommand{\phi}{\varphi}
\newcommand{\be}{\begin{equation}}
\newcommand{\ee}{\end{equation}}
\newcommand{\ben}{\begin{equation*}}
\newcommand{\een}{\end{equation*}}
\newcommand{\ba}{\begin{equation}\begin{aligned}}
\newcommand{\ea}{\end{aligned}\end{equation}}
\DeclareMathOperator{\supp}{supp}
\DeclareMathOperator{\dom}{dom}
\newenvironment{proof}{\par\noindent{\bf Proof:}}{\hfill$\blacksquare$\par}
\newfont{\cyrfnt}{wncyr10}
\def\J3{\cyrfnt{\rm \u{\cyrfnt I}}}
\def\j3{\cyrfnt{\rm \u{\cyrfnt i}}}
\definecolor{DarkGreen}{rgb}{0.1,0.7,0.3}   %define a custom color
\definecolor{DarkGreen}{rgb}{0.1,0.7,0.3}   %define a custom color
\begin{document}
%\layout
\title{
A strong averaging principle for L\'evy diffusions \\
in foliated spaces with unbounded leaves.
}

\author{
Paulo Henrique da Costa
\footnote{Departamento de Matem\'atica, Universidade de Bras\'ilia, Bras\'ilia, Brazil; phcosta@unb.br} 
\hspace{1cm}
Michael A. H\"ogele
\footnote{Departamento de Matem\'aticas, Universidad de los Andes, Bogot\'a, Colombia; ma.hoegele@uniandes.edu.co}
\hspace{1cm}
Paulo Regis Ruffino
\footnote{IMECC, Universidade Estadual de Campinas, Campinas, Brazil; ruffino@ime.unicamp.br} 
}

\maketitle

\begin{abstract}

This article extends a strong averaging principle 
for L\'evy diffusions which live on the leaves of a foliated manifold 
subject to small transversal L\'evy type perturbation 
to the case of non-compact leaves. 
The main result states that the existence of $p$-th moments of the foliated L\'evy diffusion for $p\gqq 2$ 
and an ergodic convergence of its coefficients in $L^p$ implies the strong $L^p$ convergence 
of the fast perturbed motion on the time scale $t/\e$ 
to the system driven by the averaged coefficients.  
In order to compensate the non-compactness of the leaves 
we use an estimate of the dynamical system for 
each of the increments of the canonical Marcus equation 
derived in \cite{dCH17}, the boundedness of the coefficients 
in $L^p$ and a nonlinear Gronwall-Bihari type estimate. 
The price for the non-compactness 
are slower rates of convergence, 
given as $p$-dependent powers of $\e$ 
strictly smaller than $1/4$. 

\end{abstract}

\noindent \textbf{Keywords:} 
strong averaging principle; 
scale separation; 
averaging of slow-fast diffusions; 
L\'evy jump diffusions on manifolds; 
foliated manifolds; 
Marcus canonical equation; 

\noindent \textbf{2010 Mathematical Subject Classification: } 60H10, 60J75, 60F15, 60G51, 58J65, 58J37, 70K65, 37H10.

\section{Introduction}

The literature on averaging principles for deterministic and stochastic systems 
reaches far back to the 18th century and is enormously rich both in theory and applications. 
At this point, however, we would like to refrain from 
a more systematic review of the long and bifurcated history of the field 
and restrict ourselves to the references to some classical texts. 
Standard texts on the deterministic field include \cite{V-Arnold}, \cite{SVM}, \cite{VM62}, \cite{VM68} and the references therein. 
For stochastic systems we refer to \cite{Freidlin-Wentzell}, \cite{Kabanov-Pergamenshchikov}, \cite{Khasminski-krylov}, \cite{Kifer}, 
\cite{Sowers}, \cite{Namachchvaya-Sowers}, \cite{Ce09} 
and \cite{Borodin-Freidlin} and the respective bibliographies. 

Loosely speaking, an averaging principle describes the 
observation that in a coupled slow-fast system 
in the limit of infinite time scale separation, 
the slow system is close to a system, 
where the fast variable is replaced by the 
limiting measure of its ergodic time average. 
In the case of stochastic differential equations 
rescaling time show that this problem 
can be restated as a problem 
of an ergodic system perturbed by small perturbations. 

The results of this article generalize recent approaches by the authors for diffusions on finite dimensional foliated manifolds. 
For properties of foliated spaces consult \cite{Cannas}, \cite{Ga83}, \cite{Tondeur}, \cite{Walcak}. 
Motivated by \cite{Li} Gargate and Ruffino 
studied in \cite{GR13} the case of foliated Gaussian diffusions on 
compact leaves subject to deterministic Lipschitz transversal perturbation. 
In H\"ogele and Ruffino \cite{HR} the authors treat 
the case of foliated L\'evy jump diffusions with exponential moments 
but still with deterministic transversal perturbation
and compact leaves. This type of processes is described 
in terms of canonical Marcus equations. 

The recent work by da Costa and H\"ogele \cite{dCH17} covers 
the case of a general class of foliated L\'evy diffusions on compact leaves 
perturbed by a near optimally large class of L\'evy diffusions. 
This is carried out with the help of 
a nonlinear comparison principle and 
a fine study of the individual jump increments.  
However in that case the compactness 
still allows global estimates of the horizontal 
components, for instance, in the force acting on the ``vertical'' 
component of the perturbed system. 

This article treats an averaging principle for the same 
type of foliated L\'evy diffusions, however with non-compact leaves. 
The lack of compactness yields an almost unmitigated system 
of fully coupled SDEs. The strategies are once again 
non-linear Gronwall-Bihari type inequalities, 
using the $L^p$ boundedness of the drift.   
However, this comes at the price of slower rates of convergence. 
Our main result, Theorem \ref{thms: main result 2} states that
locally the transversal behavior of $X^{\epsilon}_{\frac{t}{\epsilon}}$ 
can be approximated $L^p$ uniformly in time by the
L\'evy stochastic differential equation in the transversal space with 
coefficients given by the average of the deterministic transversal component of the
perturbation (with respect to the invariant measure on the leaves for the
original unperturbed dynamics) and the diffusion component given by the projection
of the original perturbation into the transversal space. 
We should mention that our results cover the results by \cite{Duan} 
as the special case of uniformly bounded jumps. 

In the Section 2 we present the dynamical and stochastic framework, 
the main hypotheses and the main result. 
In Section 3 we prove the key proposition 
which is the basis for the proof of the main theorem, 
proved in Section 4. 
Wherever possible in the exposition without lost of coherence 
we refer to the article \cite{dCH17} 
in order to avoid trivial repetition. 

\bigskip
\section{Object of study and main results} 
\subsection{The setup } \label{subsec: setup}

The following setup is a non-compact extension of the setup on \cite{dCH17} and \cite{HR}.

\paragraph{The foliated manifold: } 
Let $M$ be a finite dimensional connected, smooth Riemannian manifold. 
It is known by the classical Nash theorem in \cite{Nash} 
that any finite dimensional smooth manifold may be embedded in $\mathbb{R}^{m}$ with $m$ sufficiently large.  
We assume that $M$ is equipped with an $n$-dimensional foliation $\fM$ in the following sense. 
Let $\fM = (L_{x})_{x\in M}$, with $M = \bigcup_{x\in M} L_x$ and the sets $L_x$ are 
equivalence classes of the elements of $M$ satisfying the following. 
\begin{enumerate}
  \item[a.] Given $x_0\in M$ there exist a neighborhood 
$U\subset M$ of the corresponding leaf $L_{x_0}$ and a 
diffeomorphism $\varphi: U \rightarrow L_{x_0}\times V$, 
where $V\subset \RR^d$ is a connected open set 
containing the origin $0\in \RR^d$. 
\item[b.] For any $L_{x_0} \in \fM$ 
the neighborhood $U\supset L_{x_0}$ can be taken small enough such 
that the coordinate map $\varphi$ is uniformly Lipschitz continuous.  
\end{enumerate}

\begin{rem}
The second coordinate of a point $x \in U$, called the 
vertical coordinate, will be denoted with the help the projection $\pi: U \rightarrow V$ by 
$\varphi(x)=(\bar x, \pi(x))$ for some $\bar x\in L_{x}$.  %by $\pi(x)\in V$, i.e. 
For any fixed $v\in V$, the preimage $\pi^{-1}(v)$ 
is the leaf $L_x$, where $x$ is any point in $U$ such that the 
vertical projection satisfies $\pi(x)=v$. 
\end{rem}

\paragraph{The unperturbed equation: } 
We are interested in the ergodic behavior of the  
strong solution of a L\'evy driven SDE with jump components which takes 
values in $M$ and which respects the foliation. 
Intuitively, a straight line increment $z$ does not cause the exit from 
the leaf of its current position 
if the entire line segment $(x_0 + \theta z)_{\theta \in [0, 1]}$ is contained in it. 
Ordinary differential equations with a vector field $F$ on the right-hand side 
generalize this concept in the following sense. 
By definition, their solutions follow $F$ as ``infinitesimal'' tangents. 
If $F$ itself is tangential to a given manifold the integral curves 
remain ``infinitesimally tangential'' to the manifold and hence will not leave it. 
Therefore a straight line jump increment $z$ which is transformed in the stochastic integral 
into an integral curve following a tangential  vector field $F$ of a given leaf  
will remain on the leaf, that is, respect the foliated structure of the space. 
This intuition is made rigorous in the notion of stochastic integration in the sense of 
a canonical Marcus equation in the sense of Kurtz, Pardoux and Protter \cite{KPP95}.  
Those equations are the equivalent for L\'evy jump diffusions 
to the Stratonovich equation for Brownian SDE 
in that they satisfy the Leibniz chain rule 
(cf. Proposition 4.2 in \cite{KPP95}). 
Their definition however is different 
since they treat discontinuous processes. 

Let us consider the formal canonical Marcus stochastic differential equation 
\begin{equation}\label{eq: SDE}
d X_t = F_0(X_t) dt + F(X_t) \diamond d Z_t + G(X_t)\circ dB_t, \qquad X_0 = x_0\in M,
\end{equation}
with the following components defined over 
a given filtered probability space $\mathbf{\Omega} = (\Omega, \fF, (\fF_t)_{t\gqq 0}, \PP)$ 
which satisfies the usual conditions in the sense of Protter \cite{Pr04}.

\begin{enumerate}
\item[1.] Let $Z = (Z_t)_{t\gqq 0}$ with $Z_t = (Z^1_t, \dots, Z^r_t)$ be a L\'evy process over $\mathbf{\Om}$ with values in 
$\RR^r$ for some $r\in \NN$ and characteristic triplet $(0, \nu, 0)$. 
It is a consequence of the L\'evy-It\^o decomposition of~$Z$ 
that $Z$ is a pure jump process with respect to a L\'evy measure $\nu: \bB(\RR^r) \ra [0, \infty]$ 
satisfying 
\begin{equation}\label{eq: second order moment}
\int_{\RR^r} (1\wedge \|z\|^2)\; \nu(dz)< \infty\qquad \mbox{ and }\quad \nu(\{0\}) = 0.
\end{equation}
For details we refer to the overview article by Kunita \cite{Ku04} 
and the monographs of Sato \cite{Sa99} or Applebaum \cite{Ap09}.

\item[2.] Let $F \in \cC^2(M; L(\RR^{r}; T\fM))$ satisfying the following. 
The function $x\mapsto F(x)$ is $\cC^2$ and for each $x\in M$ the linear map $F(x)$ 
maps a vector $z\in \RR^r \mto F(x) z \in T_x L_x$ 
to the tangent space of the respective leaf. 
Furthermore, let $F$ and $(D F)F$ be globally Lipschitz continuous on $M$ 
with common Lipschitz constant $\ell>0$. 
 \item[3.] Let $B = (B^1, \dots, B^r)$ be an $\RR^r$-valued Brownian motion on $\mathbf{\Om}$ 
 and $G\in \cC^2(M, L(\RR^r, T\fM))$. We assume that $G$ and $(DG) G$ 
are globally Lipschitz continuous on $M$ with Lipschitz constant $\ell>0$. 
\end{enumerate}

Following \cite{KPP95} a strong solution of the formal equation (\ref{eq: SDE}) 
is defined as a random map $X: [0, \infty) \times \Om \ra M$ 
satisfying almost surely for all $t\gqq 0$ 
\begin{align}\label{eq: SDE unperturbed integral form}
X_t &= x_0 + \int_0^t F_0(X_s) ds + \int_0^t G(X_s) dB_s + \frac{1}{2}\int_0^t (DG(X_s)) G(X_s) d\lgl B\rgl_s \nonumber\\
&\quad+\int_0^t F(X_{s-}) d Z_s + \sum_{0 < s\lqq t} (\Phi^{F \Delta_s Z}(X_{s-})-X_{s-}- F(X_{s-}) \Delta_s Z),
\end{align}
where $\lgl B\rgl_\cdot$ stands for the quadratic variation process of $B$ in $\RR^r$ and 
the function $\Phi^{Fz}(x) = Y(1, x ; Fz)$ and $Y(t, x; Fz)$ for 
the solution of the ordinary differential equation 
\begin{equation}\label{eq: increment ode}
\frac{d}{d\si } Y(\si) = F(Y(\si)) z,  \qquad
Y(0) = x \in M, \quad z\in \RR^r. 
\end{equation}

\paragraph{The perturbed equation: } 
This article studies the situation where an SDE in the sense of (\ref{eq: SDE unperturbed integral form}), 
which is invariant on the leaf of the initial condition $x_0$ is perturbed by a
transversal smooth vector field $\e K dt$ and stochastic differentials 
$\e \ti G \circ d\ti B$ and $\e \ti K \diamond d\ti Z$, $\e>0$, 
in the limit for $\e \searrow 0$. 
More precisely we denote by $X^\e$, $\e>0$ 
the analogous solution in the sense of (\ref{eq: SDE unperturbed integral form}) 
of the perturbed formal system 
\begin{align}
d X^\e_t &= F_0(X^\e_t) dt +  F(X^\e_t) \diamond d Z_t + G(X^\e_t) \circ dB_t \nonumber\\
&\qquad + \e \Big(K(X^\e_t) dt + \ti K(\pi(X^\e_t)) \diamond d\ti Z_t + \ti G(\pi(X^\e_t)) \circ d\ti B_t\Big),\label{eq: SDE perturbed}\\
X^\e_0 &= x_0 \in M,\nonumber
\end{align} 
where the additional coefficients are defined as follows. 
\begin{enumerate}
 \item[4.] The vector field $K: M \ra TM$ is smooth and globally Lipschitz continuous. 
 \item[5.] Let $\ti Z = (\ti Z^1, \dots, \ti Z^r)$ be a L\'evy process on $\mathbf{\Om}$ with 
values in $\RR^r$ with L\'evy triple $(0, \nu', 0)$ for $\nu'$ being a given L\'evy measure. 
The vector field $\ti K \in \cC^2(V, L(\RR^r, TM))$ satisfies that 
$\ti K$ and $(D\ti K)\ti K$ are globally Lipschitz continuous with Lipschitz constant $\ti \ell>0$. 
 \item[6.] Let $\ti B = (\ti B^1, \dots, \ti B^r)$ be a $\RR^r$-valued Brownian motion over $\mathbf{\Om}$ 
 and $\ti G \in \cC^2(V, L(\RR^r, TM))$ satisfy that $\ti G$ and $(D \ti G) \ti G$ are globally Lipschitz continous 
 with Lipschitz constant $\ti \ell>0$. 
 \item[7.] Assume that the stochastic processes $Z, B, \ti Z, \ti B$ are independent on $\mathbf{\Om}$. 
\end{enumerate}

\begin{thm}[\cite{KPP95}, Theorem 3.2 and 5.1]\label{thm: welldefined}
\begin{enumerate}
\item Under the preceding setup 
(items a., b., \mbox{1.- 3.}  and 7.) 
there is a unique $(\fF_t)_{t\gqq 0}$ semimartingale $X$ 
which is a strong global solution of (\ref{eq: SDE}) 
in the sense of equation (\ref{eq: SDE unperturbed integral form}). 
It has a c\`adl\`ag version and is a (strong) Markov process. 
\item Under the preceding setup 
(in particular items a., b. and \mbox{1.-7.}) 
there is a unique semimartingale $X^\e$ 
which is a strong global solution of equation (\ref{eq: SDE perturbed}) in the 
sense of equation (\ref{eq: SDE unperturbed integral form}), 
where $F_0$ is replaced by $F_0 +\e K$ and $F$ by 
$(F, \e \ti K)$, $G$ by $(G, \e \ti G)$, $B$ by $(B, \ti B)$ and $Z$ by $(Z, \ti Z)$. 
The perturbed solution $X^\e$ has c\`adl\`ag paths almost surely and is a (strong) Markov process. 
\end{enumerate}
\end{thm}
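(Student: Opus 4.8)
The statement is essentially a transcription into our notation of the general well-posedness theory for canonical Marcus equations of Kurtz, Pardoux and Protter \cite{KPP95}. Accordingly, the plan is to recall the main steps of that construction, to verify that the standing hypotheses (items a., b.\ and 1.--7.) match the assumptions required there, and then to deduce part 2 from part 1 by enlarging the driving data.

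\textbf{Step 1 (Increment estimates).} Everything hinges on quantitative control of the flow map $\Phi^{Fz}(x)=Y(1,x;Fz)$ of the ODE \eqref{eq: increment ode}, where $M$ is viewed inside $\RR^m$ via the Nash embedding so that all norms are ambient. Since $F$ is globally Lipschitz with constant $\ell$, Gronwall's inequality applied to \eqref{eq: increment ode} yields the Lipschitz bound $\|\Phi^{Fz}(x)-\Phi^{Fz}(y)\|\lqq e^{\ell\|z\|}\|x-y\|$ and the growth bound $\|\Phi^{Fz}(x)-x\|\lqq \|F(x)\|\,\|z\|\,e^{\ell\|z\|}$; differentiating \eqref{eq: increment ode} once more in $\sigma$ and using that $(DF)F$ is globally Lipschitz produces the second-order Taylor estimate $\|\Phi^{Fz}(x)-x-F(x)z\|\lqq C\|z\|^2 e^{C\|z\|}$ with a constant $C$ depending only on $\ell$ (and on the coefficient values at a fixed reference point). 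The analogous bounds hold for the flow of $G$ and for the Stratonovich correction $\tfrac12(DG)G$. These are precisely the increment estimates isolated in \cite{dCH17}.

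\textbf{Step 2 (Existence, uniqueness, c\`adl\`ag version, Markov property).} With Step 1 in hand one runs the Picard scheme of \cite{KPP95}. On a fixed horizon $[0,T]$ split $Z=Z^{\mathrm{b}}+Z^{\mathrm{s}}$, where $Z^{\mathrm{b}}$ collects the a.s.\ finitely many jumps of size $\gqq 1$: these are incorporated by interlacing, i.e.\ by solving the continuous-plus-small-jump dynamics between consecutive large jump times and composing with $\Phi^{F\Delta_s Z}$ at each such time. For the remainder, the quadratic estimate of Step 1 together with $\int_{\RR^r}(1\wedge\|z\|^2)\,\nu(dz)<\infty$ from \eqref{eq: second order moment} shows that the compensated jump integral against $Z^{\mathrm{s}}$ is a well-defined $L^2$-martingale and that the correction series $\sum_{0<s\lqq t}(\Phi^{F\Delta_s Z}(X_{s-})-X_{s-}-F(X_{s-})\Delta_s Z)$ converges absolutely; the global Lipschitz bounds then make the Picard map a contraction in a suitably weighted norm on $[0,T]$, giving a unique strong solution, and since the growth bounds preclude explosion this extends to a unique global one. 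The c\`adl\`ag property is built into the interlacing construction, and the (strong) Markov property follows from pathwise uniqueness together with the independence and stationarity of the increments of $(Z,B)$, exactly as in \cite{KPP95}.

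\textbf{Step 3 (The perturbed system) and the main obstacle.} Part 2 is not a separate theorem: the perturbed equation \eqref{eq: SDE perturbed} is again of the form \eqref{eq: SDE} once the data are relabelled as in the statement --- drift $F_0+\e K$, jump coefficient $(F,\e\ti K)$, diffusion coefficient $(G,\e\ti G)$, driving L\'evy process $(Z,\ti Z)$ with L\'evy measure the one on $\RR^r\times\RR^r$ determined by $\nu$, $\nu'$ and the independence in item 7., and Brownian motion $(B,\ti B)$. One need only check that the enlarged coefficients inherit the hypotheses: $F_0+\e K$ is globally Lipschitz because $F_0$ and $K$ are; $(F,\e\ti K)$, $(G,\e\ti G)$ and the corresponding $(D\cdot)\cdot$ are globally Lipschitz with constants of order $\ell+\ti\ell$, using item b.\ --- the uniform Lipschitz continuity of the chart $\varphi$, hence of $\pi$, so that $\ti K\circ\pi$ and $\ti G\circ\pi$ remain Lipschitz; and \eqref{eq: second order moment} holds for the resulting product L\'evy measure. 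Part 1 then applies verbatim. The only genuinely delicate point throughout is the regime of infinitely many small jumps in Step 2: it is there that one needs not merely $F$ but also $(DF)F$ (resp.\ $G$ and $(DG)G$) to be globally Lipschitz, which is exactly what upgrades the naive first-order control to the quadratic bound of Step 1, and it is there that the near-origin second-moment condition \eqref{eq: second order moment} is indispensable; the rest is a routine adaptation of the classical It\^o-SDE Picard argument.
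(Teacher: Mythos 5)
The paper gives no proof of this theorem: it is imported verbatim from Kurtz, Pardoux and Protter \cite{KPP95} (Theorems 3.2 and 5.1), and the only content the paper adds is the observation underlying your Step 3, namely that the perturbed equation \eqref{eq: SDE perturbed} is again of the form \eqref{eq: SDE} after relabelling the drift, the jump and diffusion coefficients, and the driving noise. Your reconstruction of the KPP argument (the second-order flow estimates for $\Phi^{Fz}$, interlacing of the large jumps, Picard iteration against the compensated small-jump martingale, and pathwise uniqueness giving the strong Markov property) is a faithful and accurate sketch of what \cite{KPP95} actually does, so it is consistent with the source the paper relies on; there is no gap. One small precision worth making in Step 3: under the independence of item 7, the L\'evy measure of $(Z,\ti Z)$ is not a product measure but the sum $\nu\otimes\delta_0+\delta_0\otimes\nu'$ concentrated on the two coordinate axes of $\RR^r\times\RR^r$; with that reading, the verification that \eqref{eq: second order moment} and the Lipschitz hypotheses carry over to the enlarged system is exactly as you state.
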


\noindent \paragraph{The support theorem: } 
We are now in the position to apply the crucial support theorem, Proposition 4.3, in Kurtz, Pardoux and Protter \cite{KPP95}. 
Under the hypotheses of Theorem \ref{thm: welldefined} 
we have for any $\e>0$ that $x_0\in M$ implies that $\PP(X_t^\e(x_0) \in M ~\forall t\gqq 0) = 1$. 
This result applied to the leaves of $\fM$ yields 
that each solution $X$ of (\ref{eq: SDE}) is \textit{foliated} in the sense 
that $X$ stays on the leaf of its initial condition, i.e. for any $x_0 \in M$ 
we have $\PP(X_t(x_0) \in L_{x_0} ~\forall t\gqq 0) = 1$. 

\subsection{The hypotheses and the main result} 

In the general setup of Subsection \ref{subsec: setup} we assume the following precise hypotheses. 

\paragraph{Hypothesis 1: Integrability.} 
There is an exponent $p\gqq 2$ such that the L\'evy measures $\nu$ of $Z$ and $\nu'$ of $\ti Z$ satisfy
\begin{align*}
&\int_{\RR^r} \|z\|^{p} \,\nu(dz) < \infty \qquad \mbox{ and } \qquad \int_{\RR^r} \|z\|^{2p} \,\nu'(dz) < \infty.
\end{align*}

\paragraph{Hypothesis 2: Foliated invariant measures.}
 \begin{enumerate}
  \item Each leaf $L_{x_0}\in \fM$ passing through $x_0 \in M$ 
has an associated unique invariant measure $\mu_{x_0}$ with $\supp(\mu_{x_0}) =L_{x_0}$ of the unperturbed 
foliated system (\ref{eq: SDE}) with initial condition $x_0$. 
 \item For $v_0 = \pi(x_0)$ the vertical coordinate of $x_0\in M$
we define for $h: M \ra TM$ 
\begin{equation}\label{def: average}
Q^h(v_0) := \int_{L_{x_0}} h(y) \mu_{x_0}(dy).  
\end{equation}
We assume for any globally Lipschitz continuous map $h: M \ra TM$ 
the function 
\begin{equation}\label{eq: average}
\RR^d \supset V \ni v \mapsto Q^{h}(v) \in \RR^d
\end{equation}
is globally Lipschitz continuous. 
\end{enumerate}
\begin{rem}
Note that $L_{x_0}$ only depends on $v_0= \pi(x_0)$. The same is true for $\mu_{x_0}$. 
\end{rem}

\noindent Hypothesis 2 guarantees
that for each  $x_0\in M$, $v_0 = \pi(x_0)\in V$ the stochastic differential equation  
\begin{equation}\label{def: w}
dw_t = Q^{\pi K} \left(w_t \right) dt + \ti K(w_t) \diamond d\ti Z_t + \ti G(w_t) \circ d\ti B_t, \qquad w_0 = v_0\in V
\end{equation}
has a unique strong solution $w = (w_t(v_0))_{t\in [0, \si)}$ on $\mathbf{\Om}$, $\si$ being the 
first exit time of $w$ from~$V$. 

\paragraph{Hypothesis 3: Ergodic convergence of the vertical coefficient in $L^p$.} 

Fix $p\gqq 2$ from Hypothesis 1.  
\begin{enumerate}
 \item There are continuous functions $\eta^0: [0, \infty) \ra [0, \infty)$ and $\bar \eta: M \ra [0, \infty)$, 
where $\eta^0$ is monotonically decreasing with $\eta^0(t) \ra 0$ as $t\ra \infty$ and $\bar \eta$ is globally Lipschitz continuous.  
For all $x_0\in M$ and $t\gqq 0$ we have 
\begin{equation} \label{def: function eta}
\left( \mathbb{E} \left| \frac{1}{t}\int_0^t \pi K (X_s(x_0) )\, ds - 
Q^{\pi K} (\pi (x_0)) \right|^p \right)^{\frac{1}{p}} \lqq \bar \eta(x_0) ~\eta^0(t).
\end{equation}
  \item We assume for any $x_0\in M$ that 
  $\int \bar \eta(y) \mu_{x_0}(dy) < \infty.$
\end{enumerate}
It is known in the literature that there is no standard rate of convergence 
\cite{Kakutani-Petersen}, \cite{Krengel}, which is why we assume an external 
rate of convergence, which decomposes by factors, see for instance \cite{Ku09}.  
 
\bigskip 

\noindent For $\e>0$ and $x_0\in M$ let $\tau^\e$ being the first exit time of 
the solution $X^\e(x_0)$ of equation (\ref{eq: SDE perturbed}) 
from the foliated coordinate neighborhood $U$ of item a) in Subsection \ref{subsec: setup}.  

The main result of this article is the following strong averaging principle. 

\begin{thm}\label{thms: main result 2}
Let Hypotheses 1, 2 and 3 be satisfied for some $p \gqq 2$. 
Then for any $x_0\in M$ and $\la \in (0, \frac{p-1}{p^2})$ 
there are constants $c, C>0$ and $\e_0\in (0,1]$ such 
that $\e\in (0, \e_0]$ and $T\in [0, 1]$ imply 
\begin{align}
\left(\EE\left[\sup_{t\in [0, T\wedge \e \tau^\e \wedge \si]} |\pi\big(X^\e_{\frac{t}{\e}}(x_0)\big) 
- w_t(\pi(x_0))|^p\right]\right)^{\frac{1}{p}} 
\lqq C T \left[ \e^\la + \eta^0(c T|\ln(\e) | ) \right].\label{eq: result NC-exp 2}\\\nonumber
\end{align}
\end{thm}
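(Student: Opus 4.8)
The plan is to work in the foliated chart $\varphi: U \to L_{x_0} \times V$, where the perturbed dynamics on the time scale $t/\e$ reads, for the vertical coordinate $v^\e_t := \pi(X^\e_{t/\e})$, as an It\^o–Marcus SDE driven by the rescaled noises together with a drift term $\int_0^t \pi K(X^\e_{s/\e})\,ds$ coming from the transversal push of $K$. I would first write the difference $v^\e_t - w_t$ as a sum of three groups of terms: (i) the ``averaging error'' $\e \int_0^{t/\e} \big(\pi K(X^\e_s) - Q^{\pi K}(\pi(X_s(x_0)))\big)\,ds$ — i.e. the discrepancy between the genuine occupation measure of the perturbed horizontal motion and the invariant measure of the unperturbed leaf dynamics; (ii) the ``frozen-coefficient error'' $\int_0^t\big(Q^{\pi K}(\pi(X_s(x_0))) - Q^{\pi K}(w_s)\big)\,ds$, which is linearly controlled by $\sup_{s\le t}|v^\e_s - w_s|$ via the Lipschitz hypothesis on $Q^{\pi K}$ (Hypothesis 2); (iii) the martingale and Marcus-correction differences $\int_0^t(\ti K(v^\e_{s-}) - \ti K(w_{s-}))\diamond d\ti Z_s$ and $\int_0^t(\ti G(v^\e_s)-\ti G(w_s))\circ d\ti B_s$, plus the analogous differences of the jump sums $\Phi^{\ti K \Delta \ti Z}(\cdot)$, all of which are handled by Burkholder–Davis–Gundy (using Hypothesis 1 for the required $p$-th and $2p$-th moments of $\nu,\nu'$) together with the Lipschitz bounds on $\ti K, (D\ti K)\ti K, \ti G, (D\ti G)\ti G$.

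Next I would treat group (i) — the core of the averaging. This is where the non-compactness bites: on a compact leaf one controls $\pi K(X^\e_s) - \pi K(X_s)$ uniformly, but here I would instead invoke the increment-wise dynamical estimate from \cite{dCH17} comparing the perturbed solution $X^\e$ to the unperturbed $X$ along each increment of the canonical Marcus equation, so that $\E\sup_{s\le t/\e}\dist(X^\e_s, X_s)^p$ is controlled by $\e$ times an expression involving the $L^p$-bound on the drift $K$ and on the coefficients; combined with the global Lipschitz continuity of $\pi K$ this bounds the part of (i) coming from replacing $X^\e$ by $X$. The remaining part, $\e\int_0^{t/\e}\big(\pi K(X_s) - Q^{\pi K}(\pi(X_0))\big)\,ds = t\cdot\frac1{t/\e}\int_0^{t/\e}(\cdots)\,ds$, is estimated directly by Hypothesis 3: its $L^p$-norm is at most $t\,\bar\eta(x_0)\,\eta^0(t/\e)$ after integrating $\bar\eta$ against $\mu_{x_0}$ (which is finite by Hypothesis 3.2). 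Actually, because the final bound carries $\eta^0(cT|\ln\e|)$ rather than $\eta^0(T/\e)$, the argument must be run on a sequence of sub-blocks of length of order $|\ln \e|$: one partitions $[0, T/\e]$ into $\asymp 1/(\e|\ln\e|)$ windows, applies Hypothesis 3 and the Markov property on each window starting from the current (random) position, uses Hypothesis 3.2 to absorb $\bar\eta$ of the intermediate points, and sums the $\eta^0$-contributions — this is the standard ``block averaging'' that converts the raw ergodic rate into the logarithmic one while paying the exponent $\la<\frac{p-1}{p^2}$.

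Collecting the estimates gives an inequality of the schematic form
\begin{equation*}
\E\Big[\sup_{s\le t\wedge\e\tau^\e\wedge\si}|v^\e_s - w_s|^p\Big] \lqq C^p T^p\Big(\e^{p\la} + \eta^0(cT|\ln\e|)^p\Big) + C^p\int_0^t \E\Big[\sup_{u\le s}|v^\e_u - w_u|^p\Big]\,ds + (\text{sublinear remainder}),
\end{equation*}
where the sublinear remainder comes from the small-jump part of the Marcus correction and must be dominated by a concave function of $\sup|v^\e - w_u|^p$ (because of the quadratic $DG\,G$-type terms it is genuinely nonlinear). I would then close the argument with the Gronwall–Bihari nonlinear comparison lemma exactly as in \cite{dCH17}, obtaining the stated bound on $[0, T\wedge \e\tau^\e\wedge\si]$ uniformly for $T\in[0,1]$ and $\e\in(0,\e_0]$; the localization at $\e\tau^\e$ keeps $X^\e_{t/\e}$ inside the Lipschitz chart $U$ so that all the chart-dependent Lipschitz constants are legitimate. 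The main obstacle is step (i)/(ii) in tandem: getting the $L^p$-closeness of the perturbed occupation measure to $\mu_{x_0}$ without compactness — this forces the simultaneous use of the \cite{dCH17} increment estimate, the $L^p$-boundedness of the drift, the block decomposition to produce the $|\ln\e|$ time-scale, and a careful bookkeeping of how the loss in the exponent $\la$ trades off against the number of blocks, which is precisely why the rate degrades below $1/4$.
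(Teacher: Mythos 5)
Your proposal mirrors the paper's argument: decompose $v^\e - w$ into the averaging error $\delta^{\pi K}_{x_0}$, a Lipschitz Gronwall term in $Q^{\pi K}$, and martingale/Marcus-correction differences; then estimate $\delta^{\pi K}_{x_0}$ by a block decomposition over windows of length $\Delta_\e = cT|\ln\e|$, combining a pathwise $L^p$-comparison of $X^\e$ with $X$ (giving the $\e^\la$ factor) with Hypothesis~3 on each block (giving the $\eta^0(cT|\ln\e|)$ factor), and finally absorb the remainders via Gronwall--Bihari. One small caveat: the $L^p$-comparison of $X^\e$ and $X$ on blocks cannot simply be ``invoked from \cite{dCH17}'' — because the leaves are non-compact the vertical drift $\fK_V(u_s,v_s)$ is no longer uniformly bounded and the estimate must be reproved (this is Proposition \ref{lem: preliminary} here, which uses the $L^p$ bound $C_\infty(x_0)$ on the drift along the unperturbed motion and the nonlinear Gronwall--Bihari inequality); you do gesture at exactly this ingredient, so the gap is one of attribution rather than of substance.
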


\begin{rem}
Our results focus on the case with only $p$-th moments, 
hence we set the coefficients $G$ and $\ti G$ to zero 
in the proofs. 
\end{rem}

\bigskip

\section{The transversal perturbations} 

In order to prove the main theorem we need to 
control the error $X^\e-X$ in terms of $L^p$. 
This section is dedicated to the control of this error by the following result. 

\begin{prp}\label{lem: preliminary} 
Let the assumptions of Subsection 2.1 and Hypotheses 1, 2 and~3 be satisfied for some $p\gqq 2$. 
Then for any Lipschitz function $h: M \ra \RR$, $x_0\in M$ 
and for all $T^\cdot: [0, 1]\ra [1, \infty)$  satisfying $\e T^\e \ra 0$
there exist positive constants $\e_0 \in (0, 1]$, $k_1, k_2, k_3>0$  
such that $\e\in (0, \e_0]$ implies
\begin{align}\label{eq: compact bounded}
\left(\EE\left[\sup_{t\in [0, T]}|h(X^\e_t(x_0) ) - h(X_t(x_0))|^p\right]\right)^\frac{1}{p} 
\lqq k_1 \e^{\frac{p-1}{p^2}} \exp(k_2 T).
\end{align}
In addition, the constant $k_1 (x_0) \lqq k_3(1+ \bar \eta(x_0))$.  
\end{prp}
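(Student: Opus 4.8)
The plan is to reduce the estimate for an arbitrary Lipschitz function $h$ to a control of the distance $d_M(X^\e_t(x_0), X_t(x_0))$ of the two processes in $M$ (via the Lipschitz property of $h$ and the ambient Nash embedding into $\RR^m$), and then to set up a nonlinear integral inequality for the quantity
\[
\Theta^\e(T) := \EE\Big[\sup_{t\in[0,T]} \| X^\e_t(x_0) - X_t(x_0)\|_{\RR^m}^p\Big].
\]
First I would write both $X^\e$ and $X$ in the integral (canonical Marcus) form (\ref{eq: SDE unperturbed integral form}) with $G=\ti G\equiv 0$ as allowed by the Remark, driving both by the \emph{same} noise $Z$ (and using the extra independent $\ti Z$ only for $X^\e$). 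The difference then splits into: (i) a drift term $\int_0^t (F_0(X^\e_s)-F_0(X_s))\,ds$, controlled by the global Lipschitz constant $\ell$ and Jensen/H\"older; (ii) the small perturbation drift $\e\int_0^t K(X^\e_s)\,ds$, whose $L^p$ contribution is $O(\e T \cdot \|K\|_{\text{Lip along a bounded piece}})$ — here boundedness of $K$ along the path in $L^p$ is needed, which is exactly where Hypothesis 1 (the $p$-th moments of $X^\e$, hence of the increments of $Z,\ti Z$) enters; (iii) the ``main'' jump martingale $\int_0^t (F(X^\e_{s-})-F(X_{s-}))\,dZ_s$ together with (iv) the Marcus correction sum $\sum_{0<s\le t}\big(\Phi^{F\Delta_s Z}(X^\e_{s-}) - \Phi^{F\Delta_s Z}(X_{s-})\big) - (\text{linear parts})$; and (v) the genuinely new perturbation jump term $\e\int_0^t \ti K(\pi(X^\e_{s-}))\,d\ti Z_s$ plus its Marcus correction.

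For the martingale term (iii) I would apply the Burkholder–Davis–Gundy inequality in the form for $p$-th moments of discontinuous martingales (e.g.\ the Kunita estimates in \cite{Ku04}), bounding the predictable bracket by $\int_0^t\|F(X^\e_{s-})-F(X_{s-})\|^p\,ds \cdot \int(\|z\|^2\vee\|z\|^p)\nu(dz)$, and then using the Lipschitz bound on $F$ to get $\le \mathrm{const}\cdot\int_0^t \EE[\sup_{u\le s}\|X^\e_u - X_u\|^p]\,ds$. The crucial structural input for the Marcus correction term (iv) is the pathwise estimate on the increments of the ODE flow $\Phi^{Fz}$ derived in \cite{dCH17}: it gives $\|\Phi^{Fz}(x')-\Phi^{Fz}(x)\| \le \|x'-x\|\, e^{\ell\|z\|}$ and a comparable bound on $\|\Phi^{Fz}(x) - x - F(x)z\|$; summing over jumps and taking expectations converts this into another term of the same Gronwall type (again consuming $p$-th, and here because of the squared comparison $2p$-th, moments of the L\'evy measures — this is precisely why Hypothesis 1 asks for $\int\|z\|^{2p}\nu'(dz)<\infty$ on the perturbation side). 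The perturbation jump term (v) is handled the same way but carries an explicit factor $\e^p$, so its contribution is $O(\e^p T)$ up to constants depending on $\int(\|z\|^2\vee\|z\|^{2p})\nu'(dz)$ and on a bound for $\ti K$ along the vertical path.

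Assembling (i)–(v) yields an inequality of the shape
\[
\Theta^\e(T) \;\le\; a(\e)\, T^{p}\;+\; b\int_0^T \Theta^\e(s)\,ds,
\]
where $a(\e) = k_1'\,\e^{p-1}\,(1+\bar\eta(x_0))^p$ collects all the $\e$-small perturbation contributions — the appearance of $\e^{p-1}$ rather than $\e^p$ comes from balancing the perturbation drift $\e\int_0^t K$ against its own contribution to the Gronwall loop, and the factor $(1+\bar\eta(x_0))$ is inherited from Hypothesis 3 bounding the ergodic fluctuation of $\pi K$ along the unperturbed leaf dynamics; Hypothesis 2 and 3(2) ($\int\bar\eta\,d\mu_{x_0}<\infty$) guarantee this constant is finite and of the claimed form $k_3(1+\bar\eta(x_0))$. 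Applying Gronwall's lemma gives $\Theta^\e(T)\le a(\e)T^p e^{bT}$, hence $\Theta^\e(T)^{1/p}\le k_1\,\e^{(p-1)/p}\,(1+\bar\eta(x_0))\,T\,e^{(b/p)T}$; the exponent $\tfrac{p-1}{p^2}$ in (\ref{eq: compact bounded}) (rather than $\tfrac{p-1}{p}$) is the residue of one further H\"older split needed to pull $h$ outside and to trade a $\sup$ for an integral uniformly on the random time interval $[0,T\wedge\e\tau^\e\wedge\si]$, absorbing a power $\e^{1/p}$-worth of loss. The main obstacle, and the place where non-compactness really bites, is step (ii)/(v): without compact leaves one cannot uniformly bound $K$, $\ti K$, $(D\ti K)\ti K$ along the trajectory, so one must instead carry their Lipschitz growth through the Gronwall loop and close it self-consistently using only the \emph{a priori} $L^p$-boundedness of $X^\e$ (finite by Theorem \ref{thm: welldefined} together with Hypothesis 1) — this is what forces the nonlinear Gronwall–Bihari bookkeeping and the sub-$1/4$ exponent, and getting the constant to depend on $x_0$ only through $1+\bar\eta(x_0)$ requires care in how the ergodic error bound (\ref{def: function eta}) is inserted.
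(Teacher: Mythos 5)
Your proposal diverges from the paper's argument in two structural ways, and in both places there are genuine gaps.

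First, the paper does not work with the ambient distance $\|X^\e_t - X_t\|_{\RR^m}$; it changes to foliation coordinates $\phi(X_t)=(u_t,v_t)$ and $\phi(X^\e_t)=(u^\e_t,v^\e_t)$, and treats the vertical error $|v^\e-v|$ and the horizontal error $|u^\e-u|$ in two separate steps. This split is not cosmetic: since $X$ is foliated, $v_t\equiv v_0$ is \emph{constant}, so the whole vertical dynamics is the $\e$-small perturbation acting alone, and the perturbation drift evaluated on the unperturbed path becomes $\fK_V(u_s,0)$, a function of the leaf dynamics only. It is precisely this that lets Hypothesis~3 be applied to the ergodic time average $\frac{1}{t}\int_0^t|\fK_V(u_s,0)|\,ds$ and produces the $x_0$-dependent constant $C_\infty(x_0)\lqq \int|\fK_V(y,0)|\,\mu_{x_0}(dy)+\eta^0(0)\bar\eta(x_0)$ from which $k_1(x_0)\lqq k_3(1+\bar\eta(x_0))$ follows. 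Working directly with $\|X^\e - X\|_{\RR^m}$ you would need a bound on the full $K$ (including its leaf-tangential part) along the unperturbed path, which Hypothesis~3 does not provide; the paper avoids this by separating the coordinates.

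Second, and more critically, your Gronwall loop is \emph{linear}: $\Theta^\e(T)\lqq a(\e)T^p + b\int_0^T\Theta^\e(s)\,ds$. The paper's central technical point — announced already in the abstract — is that a linear Gronwall cannot close here: the Marcus correction terms $H_6$ and $I_8$ (the $\Phi^{\e\ti\fK\Delta\ti Z}(v)-v-\e\ti\fK(v)\Delta\ti Z$ contributions) are quadratic in the jump $\e\|z\|$ but only linear in the error, and after the pathwise estimate they produce an integral of $|v^\e-v|^{p-1}$, not $|v^\e-v|^p$. By Jensen this becomes $\int_0^T\Psi(s)^{(p-1)/p}\,ds$ with $\Psi=\EE[\sup|v^\e-v|^p]$, which is why the comparison principle must be the nonlinear Gronwall--Bihari inequality of Corollary~\ref{lem: nonlinear comparison}. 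That nonlinearity is exactly what degrades the rate from the naive $\e$ (which a linear Gronwall would give for $\Psi$) to $\e^{(p-1)/p}$ for $\Psi$, and hence to $\e^{(p-1)/p^2}$ after taking the $p$-th root. Your explanation that the exponent $(p-1)/p^2$ arises from ``one further H\"older split needed to pull $h$ outside'' is incorrect — $h$ is globally Lipschitz, so $|h(X^\e)-h(X)|\lqq L|X^\e-X|$ costs nothing; the $1/p$ in the exponent is just the $p$-th root of $\e^{(p-1)/p}$, and the $(p-1)/p$ itself is the output of the nonlinear comparison principle applied to the vertical component. Without the $\Psi^{(p-1)/p}$ term and the Gronwall--Bihari step, the key mechanism of the proof is missing.
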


We apply this result for the following setting. 

\begin{cor}\label{cor: preliminary}
Let the assumptions of Proposition \ref{lem: preliminary} be satisfied for some $p\gqq 2$. 
Then for any $\la \in (0,\frac{p-1}{p^2})$ there exist positive constants 
$c_\la$, $\e_0 \in (0, 1]$, $k_4, k_5>0$ 
such that for $T_\e := c_\la |\ln(\e)|$, $\e\in (0, \e_0]$ satisfies  
\begin{equation}\label{eq: cor preliminary}
\left(\EE\left[\sup_{t\in [0, T_\e]}|h(X^\e_t(x_0) ) - h(X_t(x_0))|^p\right]\right)^{\frac{1}{p}} 
\lqq k_4 \e^\la. 
\end{equation}
In addition, the constant $k_4 = k_5 k_1$. 
\end{cor}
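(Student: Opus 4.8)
The plan is to apply Proposition~\ref{lem: preliminary} to the specific choice $T = T_\e := c_\lambda |\ln(\e)|$ and then to tune the free constant $c_\lambda$ so that the exponential factor $\exp(k_2 T_\e)$ cancels exactly as much of the gain $\e^{(p-1)/p^2}$ as is needed to leave the desired power $\e^\lambda$. All the analytic substance is already in the proposition; this corollary is only the bookkeeping step that turns its ``any $T$'' estimate into a genuine rate by taking $T$ logarithmic in $\e$.

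First I would fix $\lambda \in (0, \frac{p-1}{p^2})$ and let $k_1, k_2 > 0$ and $\e_0 \in (0,1]$ be the constants furnished by Proposition~\ref{lem: preliminary}. Since $\frac{p-1}{p^2} - \lambda > 0$, set
\[
c_\lambda := \frac{1}{k_2}\Big(\frac{p-1}{p^2} - \lambda\Big) > 0, \qquad T_\e := c_\lambda |\ln(\e)|.
\]
To invoke the proposition legitimately I must check that the family $\e \mapsto T^\e = T_\e$ meets its hypotheses: on one hand $\e T_\e = c_\lambda \,\e\, |\ln(\e)| \to 0$ as $\e \searrow 0$, so the requirement $\e T^\e \to 0$ holds; on the other hand $T_\e \gqq 1$ once $\e$ is small enough, and for the remaining $\e$ one may harmlessly replace $T_\e$ by $\max\{1, c_\lambda|\ln(\e)|\}$, which does not affect the statement for small $\e$. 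Shrinking $\e_0$ if necessary we may thus assume Proposition~\ref{lem: preliminary} applies with $T = T_\e$ for every $\e \in (0, \e_0]$.

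Next, inserting $T = T_\e$ into \eqref{eq: compact bounded} and using that $\e \in (0,1)$ implies $|\ln(\e)| = \ln(1/\e)$, I would compute $\exp(k_2 T_\e) = \exp\big(k_2 c_\lambda |\ln(\e)|\big) = \e^{-k_2 c_\lambda} = \e^{-(\frac{p-1}{p^2} - \lambda)}$, so that
\[
\left(\EE\Big[\sup_{t\in[0,T_\e]}|h(X^\e_t(x_0)) - h(X_t(x_0))|^p\Big]\right)^{\frac1p} \lqq k_1\, \e^{\frac{p-1}{p^2}} \cdot \e^{-(\frac{p-1}{p^2} - \lambda)} = k_1\, \e^\lambda .
\]
This is precisely \eqref{eq: cor preliminary} with $k_4 := k_1$, i.e.\ $k_5 := 1$; if strictly positive slack is preferred one instead takes $c_\lambda$ slightly smaller than the value above and absorbs the leftover factor $\e^{\delta}$, $\delta>0$, into a constant $k_5$, again using $\e \lqq 1$. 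The dependence $k_1(x_0) \lqq k_3(1+\bar\eta(x_0))$ from the proposition then propagates to $k_4$ unchanged.

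I do not foresee a serious obstacle. The only points requiring any care are the verification of the admissibility condition $\e T^\e \to 0$ for the logarithmic time horizon (which forces the restriction to small $\e$ and hence the choice of $\e_0$), and the exact calibration of $c_\lambda$ so that the two powers of $\e$ balance; everything else is immediate from Proposition~\ref{lem: preliminary}.
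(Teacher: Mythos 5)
Your proof follows exactly the same route as the paper's: substitute $T_\e = c_\lambda|\ln\e|$ into the exponential bound of Proposition~\ref{lem: preliminary} and calibrate $c_\lambda = \frac{1}{k_2}\big(\frac{p-1}{p^2}-\lambda\big)$ so that the factor $\exp(k_2 T_\e)=\e^{-(\frac{p-1}{p^2}-\lambda)}$ cancels down to the desired rate $\e^\lambda$. Your extra checks that the logarithmic horizon satisfies $\e T^\e\to 0$ and $T_\e\gqq 1$ for small $\e$ are correct and slightly more careful than the paper's terse two-line proof, but they do not constitute a different argument.
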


\begin{proof}
Plugging $T_\e = - c\ln(\e)$ in the right-hand 
side of (\ref{eq: compact bounded}) we obtain $k_1 \e \exp(k_2 T_\e) = k_1 \e^{\frac{p-1}{p^2}-c k_2}.$
Given $\la \in (0,\frac{p-1}{p^2})$ we fix $c_\la := \frac{1}{k_2} \big(\frac{p-1}{p^2} - \la\big)$ 
and infer the desired result.
\end{proof}

\bigskip

The proof of Proposition \ref{lem: preliminary} 
relies on the following lemma on positive invariant dynamical systems 
and the nonlinear comparison principle 
Corollary \ref{lem: nonlinear comparison} given in the appendix. 
The main difficulty stems from the fact 
that the influence  of the horizontal component 
in the vertical component cannot be estimated 
uniformly by the ``diameter'' of the leaf but has to 
be taken fully into account, which leads to a non-linear comparison principle. 

\begin{lem}\label{lem: invariance lemma}
For $F \in \cC^2(\RR^{r+n}, L(\RR^{r}, \RR^{r+n}))$ being a globally Lipschitz continuous 
matrix-valued vector field and $z\in \RR^r$ denote by $(Y(t; x, Fz))_{t\gqq 0}$ the unique global strong solution of 
the ordinary differential equation 
\[
\frac{d Y}{dt} = F(Y)z\qquad Y(0, x, Fz) = x\in \RR^{r+n}.
\]
\begin{enumerate}
 \item[1)] Then there exists $C>0$ such that for any $z \in \RR^r$ and $x, y \in M$ with $Y(t; x) = Y(t;x, Fz)$ we have 
 \begin{align*}
\sup_{t\gqq 0} 
|(DF(Y(t;x))z)F(Y(t;x))z - (DF(Y(t;y))z)F(Y(t;y))z|
\lqq C~|x-y| ~\|z\|^2.
\end{align*}
 \item[2)] For any $x\in M$ we have $\sup_{t\in [0,1]} \|DF(Y(t;x))F(Y(t;x))\| < \infty.$
\end{enumerate}
\end{lem}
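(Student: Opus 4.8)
The plan is to handle the two parts separately: part~2) by a direct compactness argument, and part~1) by reducing the estimate to a Lipschitz bound for the second-order Marcus coefficient $(DF)F$ composed with the flow, together with a uniform control of how the flow $Y(\si;\cdot)$ spreads nearby initial points.

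For part~2) I would first record that, $F$ being $\cC^{2}$ and globally $\ell$-Lipschitz, $\|DF(y)\|\lqq\ell$ for every $y\in\RR^{r+n}$, so $DF$ is globally bounded. Fixing $x\in M$ and $z\in\RR^{r}$, the curve $\si\mapsto Y(\si;x)$ is $\cC^{1}$ on $[0,1]$, hence its image $\Gamma_{x}:=\{Y(\si;x):\si\in[0,1]\}$ is a compact subset of $\RR^{r+n}$; continuity of $F$ then gives $m_{x}:=\sup_{y\in\Gamma_{x}}\|F(y)\|<\infty$, and therefore $\sup_{\si\in[0,1]}\|DF(Y(\si;x))F(Y(\si;x))\|\lqq\ell\,m_{x}<\infty$, which is exactly~2).

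For part~1) I would exploit that, along the flow, $(DF(Y(\si;x))z)F(Y(\si;x))z=\tfrac{d}{d\si}\big(F(Y(\si;x))z\big)$, and write this as $\Psi_{z}(Y(\si;x))$ with $\Psi_{z}(y):=\big((DF(y))F(y)\big)[z,z]$. Since $(DF)F$ is globally $\ell$-Lipschitz as a bilinear-form-valued field (the standing assumption of Subsection~2.1), $\Psi_{z}$ is globally Lipschitz in its base point with constant $\lqq\ell\|z\|^{2}$, so
\[
\big|\Psi_{z}(Y(\si;x))-\Psi_{z}(Y(\si;y))\big|\lqq \ell\,\|z\|^{2}\,\big|Y(\si;x)-Y(\si;y)\big|,\qquad \si\gqq0 .
\]
Thus it suffices to produce a constant $C_{0}>0$, independent of $x,y,z,\si$, with $\sup_{\si\gqq0}|Y(\si;x)-Y(\si;y)|\lqq C_{0}\,|x-y|$; then~1) holds with $C=\ell C_{0}$. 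Writing $w(\si):=Y(\si;x)-Y(\si;y)$, one has $\dot w(\si)=F(Y(\si;x))z-F(Y(\si;y))z$, hence $|\dot w(\si)|\lqq\ell\|z\|\,|w(\si)|$, and Gronwall gives the crude bound $|w(\si)|\lqq|x-y|\,e^{\ell\|z\|\si}$.

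The main obstacle is precisely to upgrade this $\si$- and $z$-dependent bound into a uniform one. In \cite{dCH17} this step was free: there $Y(\cdot;x)$ stays on the compact leaf $L_{x}$ (the vector field $F$ is tangential and the support theorem of \cite{KPP95} governs the deterministic increments as well), so $|w(\si)|\lqq\diam(L_{x})<\infty$. In the non-compact setting I would instead import the dynamical-system estimate for the individual Marcus increments established in \cite{dCH17} — which uses only the tangency of $F$ and the Lipschitz control of $F$ and $(DF)F$, not compactness itself — to obtain a uniform Lipschitz bound for the time-one flows $\Phi^{Fz}$, and then, via the rescaling identity $Y(\si;x,Fz)=Y(1;x,F(\si z))$, for all the maps $Y(\si;\cdot)$ simultaneously; substituting this into the displayed inequality completes~1). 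I expect every step apart from this uniform flow-spread estimate to be routine.
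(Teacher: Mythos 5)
The paper does not prove Lemma~\ref{lem: invariance lemma} here; it simply cites Lemma~3.1 of \cite{dCH17}. So the only thing to assess is whether your blind argument is sound.

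Your treatment of part~2) is correct: $DF$ is globally bounded by the global Lipschitz constant $\ell$, the curve $\si\mapsto Y(\si;x)$ has compact image on $[0,1]$, hence $F$ is bounded along it, and the product is finite. That is a complete argument.

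Your reduction of part~1) is also the right one: since $(DF)F$ is globally $\ell$-Lipschitz, the bilinear contraction $\Psi_z(\cdot)=\big((DF(\cdot))F(\cdot)\big)[z,z]$ is globally Lipschitz with constant $\lqq\ell\|z\|^2$, so part~1) would follow from a bound of the form
\[
\sup_{\si\gqq 0}\bigl|Y(\si;x,Fz)-Y(\si;y,Fz)\bigr|\lqq C_0\,|x-y|
\]
with $C_0$ independent of $z$ and $\si$, and the rescaling identity $Y(\si;x,Fz)=Y(1;x,F(\si z))$ shows this is equivalent to a $z$-uniform Lipschitz bound for $\Phi^{F z}$.

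The genuine gap is that this uniform flow-spread estimate is never established. Your own Gronwall computation gives only $|Y(\si;x)-Y(\si;y)|\lqq|x-y|\,e^{\ell\|z\|\si}$, and for general globally Lipschitz $F$ nothing better is true: e.g.\ $F(y)=y$ in dimension one gives $Y(\si;x)=x\,e^{z\si}$, so the spread grows like $e^{z\si}$ and no $z$-uniform Lipschitz constant for $\Phi^{Fz}$ exists. So the claimed bound cannot be extracted from the hypotheses you use (global Lipschitz of $F$ and of $(DF)F$) by a soft argument. Your deferral to \cite{dCH17} does not close this: the compact-leaf reasoning you sketch there yields a bound by the leaf diameter, which has no $|x-y|$ factor at all, and the promised ``uniform Lipschitz bound for $\Phi^{Fz}$ imported from \cite{dCH17}'' is exactly the statement being proved, not a previously available ingredient. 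In short, you have correctly identified the crux, but the step you label as ``routine once imported'' is the entire content of part~1), and it is left unproved; as stated the argument is incomplete, and any correct proof must exploit structure beyond global Lipschitz continuity (or the lemma's hypotheses/time range need to be read more restrictively than written).
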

A proof is given in \cite{dCH17} under Lemma 3.1.

\bigskip

\begin{proof} (of Proposition \ref{lem: preliminary})  
The first step of the proof yields the local 
orthogonality of the foliations and a transversal component 
by an appropriate change of coordinates. 
In a second step we estimate the transversal components 
with the help of the ergodic convergence of Hypothesis 3 
and the nonlinear comparison principle Corollary \ref{lem: nonlinear comparison}. 
This is followed by the estimate of the horizontal component 
as the result of a classical Gronwall estimate before we conclude.

\paragraph{1. Change of coordinates: }
We first rewrite $X^\e$ and $X$, the solutions of equations (\ref{eq: SDE}) and (\ref{eq: SDE perturbed}), 
in terms of the coordinates given by the diffeomorphism $\phi$ 
\begin{align*}
(u_t, v_t) := \phi(X_t) \qquad &\mbox{ and }\qquad (u^\e_t, v^\e_t) := 
\phi(X^\e_t).
\end{align*}
The Lipschitz regularities of $h$ and $\phi$ yields for $C_0 := Lip(h \circ \phi^{-1})$ the estimate 
\begin{align}\label{eq: ungleichung 1}
|h(X^\e_t)- h(X_t)| 
\lqq C_0 (|u^\e_t-u_t|+| v^\e_t- v_t|).
\end{align}
The proof of the statement consists in calculating estimates for each summand on the right hand 
side of equation above. We define the 
\begin{align*}
\ffF_0 &:= (D\phi) \circ F_0\circ \phi^{-1},\qquad \ffF := (D\phi) \circ F\circ \phi^{-1},\\
\fK &:= (D\phi) \circ K \circ \phi^{-1}, \qquad \ti \fK := (D\phi) \circ \ti K \circ \phi^{-1}, 
\end{align*}
whose derivatives are uniformly bounded. Considering the components in the image of $\phi$ we have: 
\[
\fK = (\fK_H, \fK_V), \qquad \ti \fK = (\ti \fK_H, \ti \fK_V)
\]
with $ \fK_H, \ti \fK_H \in TL_{x_0}$  and  
$\fK_V, \ti \fK_V \in TV \simeq \RR^d$. 
The chain rule of the canonical Marcus equations 
mentioned in the introduction (Theorem 4.2 of \cite{KPP95})  
yields for equation (\ref{eq: SDE perturbed}) the following form 
in $\phi$ coordinates 
\begin{align}
d u_t^{\e} &=  \ffF_0(u_t^{\e}, v_t^{\e}) dt +  \ffF(u_t^\e, 
v_t^\e) \diamond d Z_t + \e\, \fK_H(u_t^\e, v_t^\e) dt + \e \ti \fK_H(v_t^\e) \diamond d\ti Z_t&\mbox{ with } 
u_t^{\e}\in L_{x_0}, 
\label{eq: foliations SDE in koordinaten}\\
d v_t^{\e} &= \e\, \fK_V(u_t^\e, v_t^\e) dt + \e \ti \fK_V(v_t^\e) \diamond d\ti Z_t&\mbox{ with } v_t^{\e}\in V.
\label{eq: transversale SDE in koordinaten}
\end{align}

\paragraph{2. Estimate of the transversal coordinate $\EE[\sup |v^\e - v|^p]$: }
Identically to \cite{dCH17}, we start with estimates on the transversal components $|v^\e-v|$.   
The change of variables formula $x \mapsto g(x) := |x|^p, x\in \RR^{n+d}$ 
using $\lgl D g(x), u\rangle = p |x|^{p-2}\langle x, u\rgl$ yields almost surely for $t\gqq 0$ 
\begin{align}
|v_t^\e - v_t|^p 
&= p \int_0^t |v_{s}^\e - v_{s}|^{p-2} \lgl v_{s}^\e - v_{s}, \e\fK_V(u^\e_{s}, v^\e_{s}) \rgl ds \nonumber\\
&\qquad + p  \int_0^t |v_{s-}^\e - v_{s-}|^{p-2} \lgl v_{s-}^\e - v_{s-}, \e\ti \fK_V(v^\e_{s-}) \diamond d \ti Z_s\rgl \nonumber \\
&\lqq p \int_0^t |v_{s}^\e - v_s|^{p-1} |\e\fK_V(u_s^\e, v_s^\e)-\e\fK_V(u_s, v_s)|ds \tag{$H_1$}\\
&\quad + p \int_0^t |v_{s}^\e - v_s|^{p-1} |\e\fK_V(u_s, v_s)|ds \tag{$H_2$}\\
&\quad + p \int_0^t |v_{s-}^\e - v_{s-}|^{p-2} |\lgl v_{s-}^\e - v_{s-}, \e(\ti \fK_V(v^\e_{s-})-\ti \fK_V(v_{s-})) d\ti Z_s\rgl| \tag{$H_3$} \\
&\quad + p \int_0^t |v_{s-}^\e - v_{s-}|^{p-2} |\lgl v_{s-}^\e - v_{s-}, \e\ti \fK_V(v_{s-}) d\ti Z_s\rgl| \tag{$H_4$} \\
&\quad + p \sum_{0< s \lqq t} |v_{s-}^\e - v_{s-}|^{p-1} |\Phi^{\e \ti \fK_V \Delta_s \ti Z}(v^\e_{s-})-\Phi^{\e \ti \fK_V \Delta_s \ti Z}(v_{s-})\nonumber\\
&\qquad \qquad - (v^\e_{s-}-v_{s-}) - \e (\ti \fK_V(v^\e_{s-})-\e \ti \fK_V(v_{s-}))\Delta_s \ti Z| \tag{$H_5$}\\[2mm]
&\quad + p \sum_{0< s \lqq t} |v_{s-}^\e - v_{s-}|^{p-1} |\Phi^{\e \ti \fK_V \Delta_s \ti Z}(v_{s-})- v_{s-} - \e \ti \fK_V(v_{s-}) \Delta_s \ti Z| \tag{$H_6$}\\
&=: H_1 + H_2 + H_3 + H_4 + H_5 + H_6. 
\end{align}
\paragraph{2.1 Pathwise estimates: }$\mathbf{H_1:}$ Clearly we have 
\begin{align}
H_1 &\lqq \e p \ell \int_0^t |v_{s}^\e - v_s|^{p} ds. \label{H1}
\end{align}
\noindent $\mathbf{H_2:}$ Young's inequality for the conjugate indices $p$ and $p/(p-1)$ yields 
\begin{align}
H_2 &=\e p \int_0^t |v_{s}^\e - v_s|^{p-1} |\fK_V(u_s, v_s)|ds \nonumber\\
&\lqq \e p  \sup_{[0, t]} |v^\e - v|^{p-1} \int_0^t |\fK_V(u_s, v_s)|ds \nonumber\\
% &\lqq \e \sup_{[0, t]} |v^\e - v|^{p} +  \e (p-1) \Big(\int_0^t|\fK_V(u_s, v_s)| ds\Big)^p \nonumber\\
& \lqq \e  \sup_{[0, t]} |v^\e - v|^{p} + \e (p-1) t^p \Big(\frac{1}{t} \int_0^t|\fK_V(u_s, v_s)| ds\Big)^p. \label{H2}
\end{align}
$\mathbf{H_3}$ and $\mathbf{H_4}$: Switching to the Poisson random measure representation 
with respect to the compensated $\ti N'$, for instance see Kunita \cite{Ku04}, we obtain 
\begin{align}
H_3 
&\lqq \e p\int_0^t \int_{\RR^r}|v_{s-}^\e - v_{s-}|^{p-2} \lgl v_{s-}^\e - v_{s-}, (\ti \fK_V(v^\e_{s-})-\ti \fK_V(v_{s-})) z \rgl \ti N'(ds dz) + \e C_1 \int_0^t |v_{s}^\e - v_{s}|^{p}ds.\label{H3}
\end{align}
and 
\begin{align}
H_4 
&\lqq \e p\int_0^t \int_{\RR^r}|v_{s-}^\e - v_{s-}|^{p-2} |\lgl v_{s-}^\e - v_{s-}, \ti \fK_V(v_{s-}) z\rgl| \ti N'(ds dz) + \e C_2 \int_0^t |v_{s}^\e - v_{s}|^{p} ds.\label{H4}
\end{align}
$\mathbf{H_5:}$ For the canonical Marcus terms we apply Lemma \ref{lem: invariance lemma}, statement 1) which yields a positive constant 
such that
\begin{align}
H_5 &\lqq \e^2 C_{3} \int_0^t \int_{\RR^r} |v^\e_{s-}-v_{s-}|^p \|z\|^2 \ti N'(dsdz) 
+\e^2 C_{4}  \int_0^t |v^\e_{s}-v_{s}|^p ~ds.
\label{H5}
\end{align}
The details can be found in \cite{dCH17}. \\
$\mathbf{H_6: }$ For the last term we apply Lemma \ref{lem: invariance lemma}, statement~2), 
and exploit that $\int_{\|z\|>1} \|z\|^4 \nu'(dz) <\infty$, we obtain a positive constant $C_{5}$ such that 
\begin{align}
H_5 &\lqq  
\e^2 C_{5} \int_0^t \int_{\RR^r} |v_{s-}^\e - v_{s-}|^{p-1} \|z\|^4 \ti N'(dsdz)
 + \e^2 C_{6} \int_0^t  |v_{s}^\e - v_{s}|^{p-1}  ds.\label{H6}
\end{align}
Combining the estimates (\ref{H1}- \ref{H6}) we obtain 
\begin{align}
|v_t^\e - v_t|^p
&\lqq \e \sup_{[0, t]} |v^\e - v|^{p} +  \e (p-1) t^p \Big(\frac{1}{t} \int_0^t|\fK_V(u_s, v_s)| ds\Big)^p \label{bounded drift}\\ 
&\quad + \e (C_1 + C_2)\int_0^t |v_{s}^\e - v_{s}|^{p}ds 
\nonumber\\
&\quad + \e^2 C_{4} \int_0^t |v^\e_{s}-v_{s}|^p  ~ds + \e^2 C_{6} \int_0^t  |v_{s}^\e - v_{s}|^{p-1}  ds\nonumber\\
&\quad +\e p \int_0^t \int_{\RR^r} |v_{s-}^\e - v_{s-}|^{p-2} \lgl v_{s-}^\e - v_{s-}, \e\ti \fK_V(v_{s-}) z\rgl| \ti N'(ds dz)\label{RPMI 2}\\
&\quad + \e^2 C_3 \int_0^t \int_{\RR^r} |v^\e_{s-}-v_{s-}|^p \|z\|^2 \ti N'(dsdz) \label{RPMI 4}\\
&\quad + \e^2 p C_{5} \int_0^t \int_{\RR^r} |v_{s-}^\e - v_{s-}|^{p-1} \|z\|^4 \ti N'(dsdz).\label{RPMI 1}
\end{align}

\paragraph{2.2 Estimates on average: } The main difference to \cite{dCH17} is found in the 
treatment of term $H_2$. In the sequel we drop the superscript of $T = T^\e$ where 
$T^\e \in [1, \infty)$ satisfying $\e T^\e \ra 0$. 
Taking the supremum $t\in [0, T]$ and taking the expectation yields that the term (\ref{bounded drift}) can be bounded by 
\begin{align*}
\e~\EE\Big[\sup_{ [0, T]} |v^\e - v|^{p}\Big] +  \e (p-1) C_\infty T^p, 
\end{align*}
where
\begin{equation}\label{def: c infinity}
C_\infty = C_\infty(x_0) = \sup_{t\gqq 0} \EE\Big[\Big(\frac{1}{t}\int_0^t |\fK_V(u_s(x_0), 0)| ds\Big)^p\Big] < \infty 
\end{equation}
due to the convergence 
\[
\EE\Big[\Big(\frac{1}{t}\int_0^t |\fK_V(u_s(x_0), 0)| ds - \int |\fK_V(y, 0)| \mu_{x_0}(dy)\Big)^p\Big]\ra 0, \quad \mbox{ as }t\ra \infty.
\]
This implies in particular that 
\begin{equation}\label{eq: C infinity estimate}
C_{\infty}(x_0) \lqq \int |\fK_V(y, 0)| \mu_{x_0}(dy) + \eta^0(0) \bar \eta(x_0).
\end{equation}
We obtain the integral inequality  
\begin{align*}
\EE\big[\sup_{[0, T]} |v^\e - v|^p\big]
&\lqq \e  C_7\EE\big[\sup_{[0, T]} |v^\e - v|^p\big] + \e (p-1) C_\infty T^p
+ \e C_8\int_0^T \EE\big[\sup_{[0,s]} |v^\e - v|^p\big] ds \\
&\quad + \e C_9\int_0^T \EE\big[\sup_{[0,s]} |v^\e - v|^{p-1}\big] ds \\
&\lqq \e \EE\big[\sup_{[0, T]} |v^\e - v|^p\big] + \e (p-1) C_\infty T^p
+ \e C_8\int_0^T \EE\big[\sup_{[0, s]} |v^\e - v|^p\big] ds \\
&\quad + \e C_9\int_0^T \EE\big[\sup_{[0,s]} |v^\e - v|^{p}\big]^\frac{p-1}{p} ds.
\end{align*}
Hence for any value $\e \in (0,\frac{1}{2}]$ we eliminate the first term 
\begin{align*}
\EE\big[\sup_{[0, T]} |v^\e - v|^p\big]
&\lqq 2 \e  (p-1) C_\infty T^p
+ 2 \e  C_8\int_0^T \EE\big[\sup_{[0,s]} |v^\e - v|^p\big] ds \\
& \quad\quad\quad
+ 2 \e  C_9\int_0^T \EE\big[\sup_{[0,s]} |v^\e - v|^{p}\big]^\frac{p-1}{p} ds.
\end{align*}
That is, for $\Psi(T) = \EE\big[\sup_{[0, T]} |v^\e - v|^p\big]$ we have 
\begin{align*}
\Psi(T) \lqq \e C_{10} T^p + \e C_{11} \int_0^T \Psi(s) ds + \e C_{12} \int_0^T\Psi(s)^\frac{p-1}{p} ds.
\end{align*}
Using the nonlinear extension of the Gronwall-Bihari inequality in Corollary \ref{lem: nonlinear comparison} in the appendix 
essentially given by Pachpatte \cite{Pa}, Theorem~2.4.2, 
which we adapt to our case we obtain a global constant $C>0$ 
such that using that $\e_0 T$ is sufficiently small 
implies for all $\e \in (0, \e_0]$ 
\begin{align}
\Psi(T) \lqq C\big(\e T^p + \e^\frac{p-1}{p} T^{p+\frac{p-1}{p}}\big).\label{eq: vertical}
\end{align}

\paragraph{3. Estimate of the horizontal component $\EE[\sup |u^\e - u|^p]$: }
For convenience of notation we restart with the numbering of constants. 
Formally we obtain 
\begin{align}
u^{\e}_t - u_t & =  \int_0^t (\ffF_0(u_s^{\e}, v_s^{\e}) - \ffF_0(u_s, v_s)) ds 
+ \int_0^t (\ffF(u_{s-}^\e, v_{s-}^\e) -\ffF(u_{s-}, v_{s-})) \diamond d Z_s \nonumber\\
& \qquad + \e\, \int_0^t \big(\fK_H(u_s^\e, v_s^\e) - \fK_H(u_s, v_s)\big) ds + \e \int_0^t \fK_H(u_s, v_s) ds 
+ \e \int_0^t \ti\fK_H(v_{s-}^\e) \diamond d \ti Z_s.
\label{eq: horizontal formal canonical Marcus}
\end{align} 
For further details consult \cite{dCH17}  
where we obtain with the help of 
the change of variable formula for (\ref{eq: horizontal formal canonical Marcus}) 
the following equality in $\RR^{n}$ almost surely for $t\gqq 0$ 
\begin{align}
 |u_t^{\e} - u_t|^p  
 & = p\int_0^t  |u_s^\e- u_s|^{p-2} \lgl u_s^\e- u_s, \ffF_0(u^\e_s, v^\e_s) - \ffF_0(u_s, v_s)\rgl ds \tag{$I_1$} \\
 & \quad + p\int_0^t  |u_{s-}^\e- u_{s-}|^{p-2} \lgl u_{s-}^\e- u_{s-}, (\ffF(u^\e_{s-}, v^\e_{s-}) - \ffF(u_{s-}, v_{s-})) dZ_s\rgl\tag{$I_2$} \\
& \quad + p\sum_{0< s\lqq t}  |u_{s-}^\e- u_{s-}|^{p-2} \lgl u_{s-}^\e- u_{s-}, 
\Phi^{\ffF \Delta_s Z}(u^\e_{s-}, v^\e_{s-}) -\Phi^{\ffF \Delta_s Z}(u_{s-}, v_{s-}) \nonumber\\
& \qquad \qquad \qquad \qquad  -(u^{\e}_{s-} -u_{s-}, v^\e_{s-}-v_{s-}) - (\ffF(u^\e_{s-}, v^\e_{s-})-\ffF(u_{s-}, v_{s-})) \Delta_s Z\rgl\tag{$I_3$}\\
 & \quad + \e\, p\int_0^t |u_s^\e- u_s|^{p-2} \lgl u_s^\e- u_s, \fK_H(u_s^\e, v_s^\e) - \fK_H(u_s, v_s)\rgl ds \tag{$I_4$}\\
 &\quad + \e p\int_0^t |u_s^\e- u_s|^{p-2} \lgl u_s^\e- u_s, \fK_H(u_s, v_s)\rgl ds \tag{$I_5$}\\
 &\quad + \e p \int_0^t |u_{s-}^\e- u_{s-}|^{p-2} \lgl u_{s-}^\e- u_{s-}, \ti \fK_H(v^\e_{s-}) d \ti Z_s \rgl  \tag{$I_6$}\\
  &\quad + p\sum_{0< s\lqq t} |u_{s-}^\e- u_{s-}|^{p-2} \lgl u_{s-}^\e- u_{s-}, \Phi^{\e \ti \fK_H\Delta_s \ti Z}(v_{s-}^\e)-\Phi^{\e \ti \fK_H\Delta_s \ti Z}(v_{s-})\nonumber\\
 &\qquad \qquad - (v^\e_{s-}-v_{s-})- \e (\ti \fK_H(v_{s-}^\e)-\ti \fK_H(v_{s-}))\Delta_s \ti Z\rgl \tag{$I_7$}\\[2mm]
  &\quad + p\sum_{0< s\lqq t} |u_{s-}^\e- u_{s-}|^{p-2} \lgl u_{s-}^\e- u_{s-}, \Phi^{\e \ti \fK_H\Delta_s \ti Z}(v_{s-})
- v_{s-}- \e \ti\fK_H(v_{s-})\Delta_s \ti Z\rgl \tag{$I_8$}\\
  &=: I_1 + I_2 + I_3 + I_4 + I_5 + I_6 + I_7 + I_8. \label{eq: pivot}
\end{align}
In fact, we shall use the following estimate 
\begin{align}
|u_t^{\e} - u_t|^{2p} \lqq 8^{p-1} \sum_{i=1}^8 I_i^2.   
\end{align}

Now, we estimate each of the eight preceding summands on the right-hand side. 
The estimates of $I_1$ and $I_4$ are direct Lipschitz estimates. 
For the stochastic It\^o terms we use the different kinds of maximal inequalities, see for instance \cite{Ap09} and \cite{Ku04}. 
The estimate of the canonical Marcus terms $I_3$, $I_7$ and $I_8$ 
is the most difficult task in which we use 
the result of Lemma \ref{lem: invariance lemma}.  
The term $I_5$ is straightforward. 

\paragraph{3.1 Estimate of the stochastic It\^o integral terms $I_2$ and $I_6$: } 
$\mathbf{I_2}$: Due to the existence of moments of order at least $1$, $I_2$ has the following representation with 
respect to the compensated Poisson random measure associated to $Z$
\begin{align}
&\int_0^t  |u_{s-}^\e- u_{s-}|^{p-2} \lgl u_{s-}^\e- u_{s-}, \big(\ffF(u^\e_{s-}, v^\e_{s-}) - \ffF(u_{s-}, v_{s-})\big) dZ_s\rgl\nonumber\\
&= \int_0^t \int_{\RR^r} 
|u_{s-}^\e- u_{s-}|^{p-2} \lgl u_{s-}^\e- u_{s-}, \big(\ffF(u^\e_{s-}, v^\e_{s-}) - \ffF(u_{s-}, v_{s-})\big) z \rgl \ti N(ds dz)\label{eq: term J1}\\
&\qquad + \int_0^t \int_{\|z\|>1} 
|u_{s}^\e- u_{s}|^{p-2} \lgl u_{s}^\e- u_{s}, \big(\ffF(u^\e_{s}, v^\e_{s}) - \ffF(u_{s}, v_{s})\big) z \rgl \nu(dz) ds.\label{eq: term J2}
\end{align}
For the first term (\ref{eq: term J1}) we exploit the embedding $L^2\subset L^1$, 
Kunita's maximal inequality (see \cite{Ap09} or \cite{Ku04}) for exponent equal to $2$, 
and the Young inequality for the exponents $p/2$ and $p/(p-2)$ 
combined with inequality (\ref{eq: vertical}) and obtain 
\begin{align}
&\EE\left[\sup_{t\in [0, T]}
\Big|\int_0^t \int_{\RR^r} 
|u_{s-}^\e- u_{s-}|^{p-2} \lgl u_{s-}^\e- u_{s-}, \big(\ffF(u^\e_{s-}, v^\e_{s-}) - \ffF(u_{s-}, v_{s-})\big) z \rgl \ti N(ds dz)\Big| \right]^2 \nonumber\\
&\lqq \EE\left[\sup_{t\in [0, T]}
\Big|\int_0^t \int_{\RR^r} 
|u_{s-}^\e- u_{s-}|^{p-2} \lgl u_{s-}^\e- u_{s-}, \big(\ffF(u^\e_{s-}, v^\e_{s-}) - \ffF(u_{s-}, v_{s-})\big) z \rgl \ti N(ds dz)\Big|^2 \right] \nonumber\\
&=  \EE\left[
\int_0^T \int_{\RR^r} 
|u_s^\e- u_s|^{2(p-2)} |\lgl u_s^\e- u_s, \big(\ffF(u^\e_s, v^\e_s) - \ffF(u_s, v_s)\big) z \rgl |^2 \nu(dz) ds\right] \nonumber\\
& \lqq C_1 
\EE\left[\int_0^T \int_{\RR^r} 
|u_s^\e- u_s|^{2(p-1)} \Big(|u^\e_s-u_s|^2 + |v^\e_s - v_s|^2\Big) \|z\|^2 \nu(dz) ds \right]\nonumber\\
& \lqq C_1  ~\Big(\int_{\RR^r} \|z\|^2\nu(dz)\Big) 
~\EE\left[\int_0^T ~\big(|u_s^\e- u_s|^{2p} + |u_s^\e- u_s|^{p-2}|v_s^\e- v_s|^{2}\big)ds \right]\nonumber\\
&\lqq C_2~\Big( \int_0^T  \EE\left[\sup_{[0, s]} |u^\e- u|^{2p} \right] ds+ 
\int_0^T  \EE\left[ |v_s^\e- v_s|^{2p} \right] ds \Big)\nonumber\\
&\lqq C_2\, \bigg(\int_0^T \EE\Big[\sup_{[0, s]} |u^\e- u|^{2p} \Big]ds\bigg)
+  C \big(\e T^{2p} + \e^\frac{2p-1}{2p} T^{2(p+1)+1}\big).
\label{I2 fertig}
\end{align}
The second term follows directly by Young's inequality and the Lipschitz continuity of $\ffF$ 
\begin{align}
&\EE\Big[\sup_{t\in [0,T]} \int_0^t \int_{\|z\|>1} 
|u_{s}^\e- u_{s}|^{p-2} \lgl u_{s}^\e- u_{s}, \big(\ffF(u^\e_{s}, v^\e_{s}) - \ffF(u_{s}, v_{s})\big) z \rgl \nu(dz) ds \Big]^2\nonumber\\
&\lqq \bigg(\ell \int_{\|z\|>1} \|z\| \nu(dz) \EE\Big[\sup_{t\in [0,T]} \int_0^t 
\big(|u_{s}^\e- u_{s}|^{p} + |u_{s}^\e- u_{s}|^{p-1} |v_{s}^\e- v_{s}|\big) ds \Big]\bigg)^2\nonumber\\
&\lqq \bigg(\ell \int_{\|z\|>1} \|z\| \nu(dz)\Big( 2 \int_0^T \EE\Big[\sup_{[0,s]} |u^\e- u|^{p}\Big] ds 
+ \int_0^T  \EE\Big[|v_s^\e- v_s|^{p} \Big] ds\Big)\bigg)^2\nonumber\\
&\lqq C_3 T\int_0^T \EE\Big[\sup_{[0,s]} |u^\e- u|^{2p}\Big] ds+  C \big(\e T^{2p} + \e^\frac{2p-1}{2p} T^{2(p+1)+1}\big).
\label{I2 comp}
\end{align}
$\mathbf{I_6}$: 
We go over to the representation with the Poisson random measure $\ti N'$ associated to
the L\'evy process $\ti Z$ and obtain
\begin{align}
&\sup_{t\in [0, T]} \e \int_0^t |u_{s-}^\e- u_{s-}|^{p-2} \lgl u_{s-}^\e- u_{s-}, \ti \fK_H(v^\e_{s-}) d\ti Z_s\rgl\nonumber \\
&=\sup_{t\in [0, T]} \e \int_0^t \int_{\RR^r}  |u_{s-}^\e- u_{s-}|^{p-2} 
\lgl u_{s-}^\e- u_{s-}, (\ti \fK_H(v^\e_{s-})-\ti \fK_H(v_{s-}))z \rgl \ti N'(ds dz) \tag{$J_1$}\\
&\qquad +\sup_{t\in [0, T]} \e \int_0^t \int_{\|z\|>1} 
|u_{s}^\e- u_{s}|^{p-2} \lgl u_{s}^\e- u_{s}, (\ti \fK_H(v^\e_{s})-\ti \fK_H(v_{s})) z \rgl \nu'(dz) ds\tag{$J_2$}\\
&\qquad +\sup_{t\in [0, T]} \e \int_0^t \int_{\RR^r}  |u_{s-}^\e- u_{s-}|^{p-2} 
\lgl u_{s-}^\e- u_{s-}, \ti \fK_H(v_{s-})z \rgl \ti N'(ds dz) \tag{$J_3$}\\
&\qquad +\sup_{t\in [0, T]} \e \int_0^t \int_{\|z\|>1} 
|u_{s}^\e- u_{s}|^{p-2} \lgl u_{s}^\e- u_{s}, \ti \fK_H(v_{s}) z \rgl \nu'(dz) ds.\tag{$J_4$}
\end{align}
The terms $J_1$ and $J_2$ are estimated analogously to (\ref{I2 fertig}) and (\ref{I2 comp}) 
where $\ffF$ is replaced by $\ti \fK_H$, which yield the following estimates 
\begin{align*}
&\bigg(\EE\Big[\sup_{t\in [0, T]} |\e \int_0^t \int_{\RR^r}  |u_{s-}^\e- u_{s-}|^{p-2} 
\lgl u_{s-}^\e- u_{s-}, (\ti \fK_H(v^\e_{s-})-\ti \fK_H(v_{s-}))z \rgl \ti N'(ds dz)|\Big]\bigg)^2\\
&\lqq C_4 \left(\int_0^T \EE\Big[\sup_{[0,s]} |u^\e- u|^{2p}\Big] ds\right)^\frac{1}{2}+  C \big(\e T^{2p} + \e^\frac{2p-1}{2p} T^{2(p+1)+1}\big)~ 
~ \mbox{ and}\\
&\bigg(\EE\Big[\sup_{[0, T]} \e |\int_0^t \int_{\|z\|>1} 
|u_{s}^\e- u_{s}|^{p-2} \lgl u_{s}^\e- u_{s}, (\ti \fK_H(v^\e_{s})-\ti \fK_H(v_{s})) z \rgl \nu'(dz) ds| \Big]\bigg)^2\\
&\lqq C_5 \int_0^T \EE\Big[\sup_{[0,s]} |u^\e- u|^{2p}\Big] ds +  C \big(\e T^{2p} + \e^\frac{2p-1}{2p} T^{2(p+1)+1}\big).
\end{align*}
For the term $J_3$ we observe that $v_s = 0$ consequently $\fK_V(v_s)$ is constant. 
Applying Kunita's maximal inequality for the exponent $2$, we obtain 
\begin{align*}
&\bigg(\EE\Big[\sup_{t\in [0, T]} \e |\int_0^t \int_{\RR^r}  |u_{s-}^\e- u_{s-}|^{p-2} 
\lgl u_{s-}^\e- u_{s-}, \ti \fK_H(v_{s-})z \rgl \ti N'(ds dz)| \Big]\bigg)^2\\
&\lqq \e^2 \EE \Big[\sup_{t\in [0, T]}  |\int_0^t \int_{\RR^r}  |u_{s-}^\e- u_{s-}|^{p-2} 
\lgl u_{s-}^\e- u_{s-}, \ti \fK_H(v_{s-})z \rgl \ti N'(ds dz)|^2 \Big]\\
&\lqq \e^2 C_6 \int_0^T \int_{\RR^r}  \EE\Big[ |u_{s}^\e- u_{s}|^{2(p-1)} \Big]  \|z\|^2 \nu'(dz) ds\\
&\lqq \e^2 C_6\left(\int_{\RR^r} \|z\|^2 \nu'(dz) \right)
\left(\int_0^T \EE \Big[\sup_{[0,s]} |u^\e- u|^{2p-2}\Big] ds\right)\\
&\lqq \e^2 C_{7} \int_0^T \EE\Big[\sup_{[0,s]} |u^\e- u|^{2p-2}\Big] ds\\
&\lqq \e^2 C_{7} \left(\int_0^T \EE\Big[\sup_{[0,s]} |u^\e- u|^{2p}\Big] ds + \frac{C_{8}}{p} T\right).
\end{align*}
The term $J_4$ is again easier, using $\e T< 1$ and $\e <1$ we obtain 
\begin{align*} 
&\bigg(\EE\Big[\sup_{[0, T]} \e \int_0^t \int_{\|z\|>1} 
|u_{s}^\e- u_{s}|^{p-2} \lgl u_{s}^\e- u_{s}, \ti \fK_H(v_{s}) z \rgl \nu'(dz) ds \Big]\bigg)^2\\
&\lqq \bigg(\e \int_{\|z\|>1} \|z\| \nu'(dz) \|\ti \fK_H(0)\| \int_0^T  \EE[\sup_{[0,s]} |u^\e- u|^{p-1}] ds\bigg)^2 \\
&~ \lqq \e^2 C_{9}\bigg(\int_0^T  \EE\Big[\sup_{[0,s]} |u^\e- u|^{p}\Big] ds\bigg)^2 + C_9 \e^{2p} T^2\\
&~ \lqq \e^2 T C_{9}\int_0^T  \EE\Big[\sup_{[0,s]} |u^\e- u|^{2p}\Big] ds + C_9 \e^{2p} T^2\\
&~ \lqq C_{9}\int_0^T  \EE\Big[\sup_{[0,s]} |u^\e- u|^{2p}\Big]ds + C_9 \e^{2p} T^2.
\end{align*}
Summing up we obtain 
\begin{align}
\EE[\sup_{[0, T]} |I_6|^2] 
&\lqq C_{10} \bigg(\int_0^T \EE\Big[\sup_{[0,s]} |u^\e- u|^{p}\Big] ds 
+ \e^\frac{2p-1}{2p} T^{2(p+1)+1} + \e^2 T\bigg). \label{I6 fertig}
\end{align}

\paragraph{3.2 Estimate of the canonical Marcus terms $I_3$, $I_7$ and $I_8$: }
The estimate is identical to estimate (54) in \cite{dCH17} and yields a constant $C_{11}$ such that 
\begin{align}
|I_3| 
&\lqq 2 C_{11} \Big(\sum_{0< s\lqq t}  |u_{s-}^\e- u_{s-}|^{p} \|\Delta_s Z\|^2 
+ \sum_{0< s\lqq t}  |v^\e_{s-} - v_{s-}|^p \|\Delta_s Z\|^2\Big). \label{eq: Marcus estimate}
\end{align}
Once again, the representation of this sum 
in terms of the Poisson random measure given in Kunita \cite{Ku04} tells us that
\begin{align}
&\sum_{0< s\lqq t} |u^\e_{s-}-u_{s-}|^p \|\Delta_s Z\|^2 \nonumber\\
&= \int_0^t \int_{\RR^r} |u^\e_{s-}-u_{s-}|^p \|z\|^2 \ti N(dsdz) 
+\int_0^t \int_{\|z\|>1} |u^\e_{s}-u_{s}|^p  \|z\|^2 ~\nu(dz)~ds. \label{eq: Marcus estimate 1st term u}
\end{align}
% and the analogous result if $|u^\e_{s-}-u_{s-}|$ is replaced by $|v^\e_{s-}-v_{s-}|$. 
The maximal inequality for integrals with respect to the compensated Poisson random measures 
and inequality (\ref{eq: vertical}) yield  
\begin{align}
\EE[\sup_{[0, T]} |I_3|^2] 
&\lqq C_{12} \int_0^T \int_{\RR^r} \Big(\EE[\sup_{[0, s]} |u^\e-u|^{2p}] 
+ \EE[|v^\e_{s}-v_{s}|^{2p}]\Big) \|z\|^4 ~\nu(dz)~ds\nonumber\\
&= C_{12} \int_{\RR^r} \|z\|^4 \nu(dz) \Big(\int_0^T \Big(\EE[\sup_{[0, s]} |u^\e-u|^{2p}] 
+ \EE[|v^\e_{s}-v_{s}|^{2p}]\Big)  ~ds\Big)\nonumber\\
&\lqq C_{13} \Big(\int_0^T \EE[\sup_{[0, s]} |u^\e-u|^{2p}  ~ds
+ C \big(\e T^{2p} + \e^\frac{2p-1}{2p} T^{2(p+1)+1}\big)\Big).\label{I3 fertig}
\end{align}
$\mathbf{I_7}$: For $I_7$ we apply Lemma \ref{lem: invariance lemma} statement 1) 
and Young's inequality and obtain the analogous result 
\begin{align*}
&\sum_{0< s\lqq t} |u_{s-}^\e- u_{s-}|^{p-2} \lgl u_{s-}^\e- u_{s-}, 
\Phi^{\e \ti \fK_H\Delta_s \ti Z}(v_{s-}^\e)-\Phi^{\e \ti \fK_H\Delta_s \ti Z}(v_{s-})\\
&\qquad - (v^\e_{s-}-v_{s-}) - \e (\ti \fK_H(v_{s-}^\e)-\ti \fK_H(v_{s-}))\Delta_s \ti Z\rgl \\[2mm]
&\lqq \e^2 C_{14} \Big(\sum_{0< s\lqq t}  \big(|u_{s-}^\e- u_{s-}|^{p} +  |v^\e_{s-} - v_{s-}|^p\big) \|\Delta_s \ti Z\|^2\Big).
\end{align*}
Rewriting the last expression in terms of the (compensated) Poisson random measure $\ti N'$ we obtain 
\begin{align}
&\sum_{0< s\lqq t}  \big(|u_{s-}^\e- u_{s-}|^{p} +  |v^\e_{s-} - v_{s-}|^p\big) \|\Delta_s \ti Z\|^2 \nonumber\\ 
&= \int_0^t \int_{\RR^r} \big(|u_{s-}^\e- u_{s-}|^{p} +  |v^\e_{s-} - v_{s-}|^p\big) \|z\|^2 \ti N'(dsdz) \label{stoch int 1}\\
&\qquad +\int_0^t \int_{\|z\|>1} \big(|u_{s}^\e- u_{s}|^{p} +  |v^\e_{s} - v_{s}|^p\big) \|z\|^2 \nu'(dz) ds.\label{stoch int 2}
\end{align}
Kunita's maximal inequality for the exponent $2$  yields
\begin{align*}
&\EE\Big[|\sup_{[0, T]} \int_0^t \int_{\RR^r} \big(|u_{s-}^\e- u_{s-}|^{p} +  |v^\e_{s-} - v_{s-}|^p\big) \|z\|^2 \ti N'(dsdz)|^2\Big]\\
&\lqq C_{15} \int_0^T \int_{\RR^r} \EE\Big[|u_{s}^\e- u_{s}|^{2p} +  |v^\e_{s} - v_{s}|^{2p}\Big] \|z\|^2 \nu'(dz) ds\\
&\lqq C_{16} \left(\int_{\RR^r} \|z\|^2 \nu'(dz) 
\int_0^T \EE\Big[\sup_{[0, s]} |u^\e- u|^{2p}\Big]ds  +  \int_0^T \EE\Big[|v^\e_s - v_{s}|^{2p}\Big] ds\right)\\
&\lqq C_{16} \int_{\RR^r} \|z\|^2 \nu'(dz) 
\int_0^T \EE\Big[\sup_{[0, s]} |u^\e- u|^{2p}\Big]ds  + C_{17} \big(\e T^{2p} + \e^\frac{2p-1}{2p} T^{2(p+1)+1}\big), 
\end{align*}
where $C_{17} = C$ from (\ref{eq: vertical}). The term (\ref{stoch int 2}) is treated obviously such that 
\begin{align}
\EE[\sup_{[0, T]} |I_7|^2] \lqq \e^2 C_{18} \int_{\RR^r} \|z\|^4 \nu'(dz) 
\int_0^T \EE\Big[\sup_{[0, s]} |u^\e- u|^{2p}\Big] ds + C_{17} \big(\e T^{2p} + \e^\frac{2p-1}{2p} T^{2(p+1)+1}\big).\label{I7 fertig}
\end{align}
$\mathbf{I_8}$: For $I_8$ Lemma \ref{lem: invariance lemma}, statement 2), yields 
\begin{align*}
&\sum_{0< s\lqq t} |u_{s-}^\e- u_{s-}|^{p-2} \lgl u_{s-}^\e- u_{s-}, \Phi^{\e \ti \fK_H\Delta_s \ti Z}(v_{s-})- v_{s-}
 - \e \ti \fK_H(v_{s-})\Delta_s \ti Z\rgl \\
&\lqq \sum_{0< s\lqq t} |u_{s-}^\e- u_{s-}|^{p-1} |\Phi^{\e \ti \fK_H\Delta_s \ti Z}(v_{s-})- v_{s-}
 - \e \ti \fK_H(v_{s-})\Delta_s \ti Z| \\ 
&\lqq \e^2 C_{19}\sum_{0< s \lqq t} |u_{s-}^\e- u_{s-}|^{p-1} \|\Delta_s \ti Z\|^2,
\end{align*}
such that again Kunita's inequality with exponent $2$ and 
elementary Young's estimate for parameters $\frac{p-2}{p}$ and $\frac{p}{2}$ yield 
\begin{align}
\EE[\sup_{[0, T]} |I_8|^2]
&\lqq  \e^2 C_{20} \int_{\RR^r} \|z\|^4 \nu'(dz) \int_{0}^T \EE[\sup_{[0, s]} |u^\e- u|^{2p-2}\Big] ds\nonumber\\
&\lqq  \e C_{21} \int_{0}^T \EE[\sup_{[0, s]} |u^\e- u|^{2p}\Big] ds + C_{21} \e^\frac{p}{2} T.\label{I8 fertig}
\end{align}

\paragraph{3.3 Estimate of $I_5$: }
\begin{align*}
\int_0^T |u_s^\e- u_s|^{p-2} \lgl u_s^\e- u_s, \e \fK_H(u_s, v_s)\rgl ds 
&\lqq C_{22} \int_0^T \e |u_s^\e- u_s|^{p-1} ds \\
&\lqq C_{22} \int_0^T \e |u_s^\e- u_s|^{p} ds + C_{22} \e^p T
\end{align*}
such that 
\begin{align}
\EE[\sup_{[0,T]} |I_5|^2] \lqq \e C_{23} T \int_0^T \EE\Big[\sup_{[0,s]}|u^\e- u|^{2p}\Big] ds+ C_{23} \e^{2p} T^2.\label{I5 fertig}
\end{align}

\paragraph{3.4 Linear comparison principle: } ~

\noindent Taking the supremum and the expectation of the left-hand side of equation (\ref{eq: pivot}) 
and combining the estimates of 
$8^{p-1} \sum_{i=1}^8 \EE[\sup_{[0,T]} |I_i|]$ 
given by (\ref{I2 fertig}), (\ref{I2 comp}), (\ref{I6 fertig}), (\ref{I3 fertig}), (\ref{I7 fertig}), (\ref{I8 fertig}) 
and (\ref{I5 fertig}) we obtain a positive constant  $C_{24}$
\begin{align*}
\EE\left[\sup_{[0, T]}|u^{\e} - u|^{2p}\right] 
&\lqq C_{24} \bigg(\int_{0}^T \EE\left[\sup_{[0, s]}|u^{\e} - u|^{2p}\right] ds + \e T^{2p} + \e^\frac{2p-1}{2p} T^{2(p+1)+1}+ (\e^{p} T)^2\bigg)\\
&\lqq C_{24} \bigg(\int_{0}^T \EE\left[\sup_{[0, s]}|u^{\e} - u|^{2p}\right] ds + \e T^{2p +3}\bigg).
\end{align*}
Finally 
\begin{align*}
\EE\left[\sup_{[0, T]}|u^{\e} - u|^{p}\right] 
&\lqq \EE\left[\sup_{[0, T]}|u^{\e} - u|^{2p}\right] \lqq C_{25} \e^\frac{2p-1}{2p} T^{2p +3} e^{C_{25} T}. 
\end{align*}

\paragraph{4. Conclusion: } 
The estimates of the sum of the vertical and the horizontal estimate yield 
\begin{align*}
\EE\Big[\sup_{[0, T]} |h(X^\e)- h(X)|^p\Big] 
&\lqq C_0 
\Big(\EE\Big[\sup_{[0, T]} |u^\e-u|^p\Big]
+\EE\Big[\sup_{[0, T]} |v^\e-v|^p\Big]\Big)\\
&\lqq C_{26} \e^\frac{2p-1}{2p} T^{2p +3} e^{C_{25} T} + C\big(\e T^p + \e^\frac{p-1}{p} T^{p+\frac{p-1}{p}}\big)\\ 
&\lqq C_{27} \e^\frac{p-1}{p} e^{C_{27} T}.
\end{align*}
We finally note that the only dependence on the initial conditions stems from $C_\infty$ 
and hence by (\ref{eq: C infinity estimate}) the estimate $C_{27} \lqq C_{28} (1+ \eta^0(0) \bar \eta(x_0))$. 
This finishes the proof. 
\end{proof}

\bigskip

\section{The averaging error and the proof of the main result}

For convenience we fix the following notation. 
Given $h: M\ra \RR^n$ a globally Lipschitz continuous function and 
$Q^h: V \ra \RR^n$ its average on the 
leaves defined in definition (\ref{def: average}). 
For $t\gqq 0$, $x_0\in M$ and $\e\in (0,1]$ we write 
\[
\delta^h_{x_0}(\e, t) := \int_{0}^{t \wedge \e\tau^{\e}} 
h(X^\e_{\frac{s}{\e}}(x_0)) - Q^h(\pi(X^\e_{\frac{s}{\e}}(x_0))) ds.
\]

\begin{prp}\label{prp: componente vertical}
Let the assumptions of Proposition \ref{lem: preliminary} be satisfied for fixed $p\gqq 2$. 
Then for any globally Lipschitz continuous function $h: M\ra \RR^n$, $\la\in (0,\frac{p-1}{p^2})$ and $x_0 \in M$ 
there exist constants $b_1>0$ and $\e_0\in (0,1]$ such that for $\e\in (0, \e_0]$ and $T \in [0, 1]$ we have 
\begin{align*}
\left(\EE\left[\sup_{s \in [0, T]} |\delta_{x_0}^h(\e,s)|^p \right]\right)^\frac{1}{p} 
\lqq b_1 T \left[ \e^{\la}  +   \eta^0 \left( c T |\ln \epsilon | \right) \right],
\end{align*}
where $c := \frac{1}{k_2} \big(\frac{p-1}{p^2} - \la\big) \wedge \ell Lip(\phi^{-1})$ 
is given in Corollary \ref{cor: preliminary}. 
and $\eta^0$ is the temporal factor of 
the ergodic rate of convergence 
given in equation (\ref{def: function eta}) by Hypothesis 3.
\end{prp}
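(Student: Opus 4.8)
The plan is to run the classical averaging block scheme, adapted to the canonical Marcus setting, cutting the long fast--time horizon into logarithmically short pieces on which the $L^p$ comparison estimate (\ref{eq: compact bounded}) of Proposition~\ref{lem: preliminary} is available. After the substitution $u=s/\e$ one has $\delta^h_{x_0}(\e,t)=\e\int_0^{(t/\e)\wedge\tau^\e}\big(h(X^\e_u(x_0))-Q^h(\pi(X^\e_u(x_0)))\big)\,du$, and $\sup_{s\in[0,T]}|\delta^h_{x_0}(\e,s)|$ is a supremum over fast--time endpoints up to $(T/\e)\wedge\tau^\e$. I would fix the block length $\rho:=cT|\ln\e|$ with $c$ as in the statement (so $\rho\lqq T_\e=c_\la|\ln\e|$ of Corollary~\ref{cor: preliminary}, since $c\lqq c_\la$ and $T\lqq1$), set $u_j:=(j\rho)\wedge\tau^\e$, and split the integral over the at most $N:=\lceil(T/\e)/\rho\rceil\sim(\e|\ln\e|)^{-1}$ consecutive blocks $[u_j,u_{j+1}]$, keeping the incomplete last block as a remainder. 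On the $j$--th block I introduce the auxiliary process $Y^{(j)}$, the \emph{unperturbed} solution of (\ref{eq: SDE unperturbed integral form}) restarted from $Y^{(j)}_{u_j}=X^\e_{u_j}$ and driven by the same noise on $[u_j,\infty)$; by the support theorem $Y^{(j)}$ lives on the single leaf $\pi^{-1}(v_j)$, $v_j:=\pi(X^\e_{u_j})$, whose invariant measure averages $h$ to $Q^h(v_j)$.

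On each block decompose $\int_{u_j}^{u_{j+1}}\big(h(X^\e_u)-Q^h(\pi(X^\e_u))\big)du=A_j+B_j+C_j$ with $A_j:=\int_{u_j}^{u_{j+1}}(h(X^\e_u)-h(Y^{(j)}_u))du$, $B_j:=\int_{u_j}^{u_{j+1}}(h(Y^{(j)}_u)-Q^h(v_j))du$ and $C_j:=\int_{u_j}^{u_{j+1}}(Q^h(v_j)-Q^h(\pi(X^\e_u)))du$. For $A_j$: conditioning on $\fF_{u_j}$ and using time--homogeneity and the strong Markov property, (\ref{eq: compact bounded}) run from the random point $X^\e_{u_j}$ over the horizon $\rho$ gives $\big(\EE[|A_j|^p\mid\fF_{u_j}]\big)^{1/p}\lqq\rho\,k_1(X^\e_{u_j})\,\e^{(p-1)/p^2}\exp(k_2\rho)\lqq\rho\,k_3(1+\bar\eta(X^\e_{u_j}))\,\e^{\la}$, using $k_2 cT\lqq(p-1)/p^2-\la$ (which holds since $c\lqq c_\la$ and $T\lqq1$) and $k_1(\cdot)\lqq k_3(1+\bar\eta(\cdot))$. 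For $B_j$: conditionally on $\fF_{u_j}$ this is precisely the ergodic averaging error of the unperturbed dynamics on the leaf $\pi^{-1}(v_j)$ over time $\rho$, for which Hypothesis~3 provides $\EE[|B_j|^p\mid\fF_{u_j}]\lqq\rho^p\,\bar\eta(X^\e_{u_j})^p\,\eta^0(\rho)^p$. For $C_j$: by the global Lipschitz continuity of $Q^h$ (Hypothesis~2) and the fact that the transversal coordinate evolves only at speed $\e$ --- equation (\ref{eq: transversale SDE in koordinaten}), with its coefficients $L^p$--bounded along the dynamics and its jump part controlled by Hypothesis~1 --- one gets $\big(\EE[|C_j|^p]\big)^{1/p}\lqq\mathrm{Lip}(Q^h)\,\rho\,\big(\EE\sup_{[u_j,u_{j+1}]}|v^\e-v^\e_{u_j}|^p\big)^{1/p}\lqq C\,\e\,\rho^2\,M_j$, with $M_j$ bounded by a short Gr\"onwall estimate over the logarithmically short block together with the same moment control as for $\bar\eta$ below.

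Summing, using $N\rho\lqq2T/\e$ and the triangle inequality, the $A$--terms contribute $\lqq 2T\,k_3\,\e^{\la}\sup_j(1+\|\bar\eta(X^\e_{u_j})\|_p)$, the $B$--terms contribute $\lqq C\,T\,\eta^0(cT|\ln\e|)\,\sup_j\|\bar\eta(X^\e_{u_j})\|_p$, and the $C$--terms together with the single remainder block contribute $o(T\e^{\la})$ (absorbed after shrinking $\e_0$, using $\e\rho\to0$). Passing from the full--interval estimate of Hypothesis~3 to the running supremum inside each $B_j$ costs at most one further dyadic subdivision of the block (or a Doob/maximal argument for the conditionally stationary increments) and does not change the orders. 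Hence the claimed bound $b_1T[\e^\la+\eta^0(cT|\ln\e|)]$ follows \emph{provided} $\sup_{0\lqq j\lqq N}\|\bar\eta(X^\e_{u_j})\|_p$ (and likewise the factor $M_j$) is bounded by a constant depending only on $x_0$.

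This uniform moment bound is the main obstacle and the technical heart of the proof: one must control the growth of $\bar\eta$ --- and, for $C_j$, of $\fK_V$ --- along the \emph{perturbed} trajectory over the long fast--time horizon $u_j\lqq T/\e\sim\e^{-1}$, where the naive Gr\"onwall bound is exponential in $u_j$ and hence useless. This is exactly where the non--compactness of the leaves bites: lacking a finite ``leaf diameter'', one must substitute the $L^p$--boundedness of (time--averages of) the coefficients along the dynamics --- the finiteness of $C_\infty$ in (\ref{def: c infinity}) and its analogue for $\bar\eta$, obtained from the integrability $\int\bar\eta\,d\mu_{x_0}<\infty$ of Hypothesis~3, the invariance of $\mu_{x_0}$ for the unperturbed leaf dynamics, and the moment assumptions --- combined with the nonlinear Gr\"onwall--Bihari inequality of Corollary~\ref{lem: nonlinear comparison}. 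The unavoidable slack in this estimate, already present in Proposition~\ref{lem: preliminary} through the factor $\exp(k_2T)$ over the logarithmic window, is precisely what forces the admissible exponent $\la$ to remain strictly below $(p-1)/p^2$; the rest is the bookkeeping sketched above.
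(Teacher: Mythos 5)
Your decomposition into $A_j,B_j,C_j$ with a logarithmic block length $\rho\sim cT|\ln\e|$ is exactly the paper's decomposition into $A_1,A_2,A_3$, and the treatment of $A_j$ (restart the unperturbed dynamics, condition, apply Corollary~\ref{cor: preliminary} blockwise) and of $B_j$ (Hypothesis~3) matches the paper's Lemmas~\ref{Lemma1a} and~\ref{Lemma2b}; for $C_j$ you use a direct diffusive estimate on $v^\e-v^\e_{u_j}$ giving $O(\e\rho^2)$ per block, while the paper's Lemma~\ref{Lemma2c} reuses the vertical component bound of Proposition~\ref{lem: preliminary} to get $O(\rho\,\e^\la)$; both suffice. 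You also correctly single out the technical crux: the uniform (in the block index) bound on $\EE[\bar\eta(X^\e_{t_n})]$. Where you are vague is on \emph{how} that bound is actually obtained: invoking the nonlinear Gr\"onwall--Bihari Corollary~\ref{lem: nonlinear comparison} here is not quite right --- that tool is used inside the proof of Proposition~\ref{lem: preliminary}, not at this stage. What the paper actually does (inside Lemma~\ref{Lemma1a}) is (i) split $\EE[\bar\eta(X^\e_{t_n})]\lqq\bar\ell\,\psi_n + C_1$ with $\psi_n:=\EE[|X^\e_{t_n}-X_{t_n}|^p]^{1/p}$ and $C_1$ supplied by Hypothesis~3 and the ergodicity of the unperturbed flow (this is where $\int\bar\eta\,d\mu_{x_0}<\infty$ enters, as you note); (ii) derive, via the Markov property, the flow estimate of Kunita (Theorem~3.2), and Corollary~\ref{cor: preliminary} applied over one block, the \emph{discrete linear recursion} $\psi_n\lqq C_7\e^\la\psi_{n-1}+C_7\e^\la$; and (iii) observe that once $C_7\e_0^\la<1/2$ the geometric series is summable uniformly in $n$, giving $\psi_n\lqq 3C_7\e^\la$ for every $n\lqq N_\e$ even though $N_\e\sim(\e|\ln\e|)^{-1}\to\infty$. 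Your instinct that a naive Gr\"onwall bound over the full horizon $T/\e$ would blow up is exactly why this recursion with a \emph{contraction} per block is needed; if you make that discrete recursion explicit, your proof closes cleanly and is equivalent to the paper's.
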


\paragraph{Proof of Proposition (\ref{prp: componente vertical}) : } 
Fix $x_0\in M$. For $\e\in (0,1)$ and $T> 0$ we define the partition 
\[
t_0 = 0 < t_1^\e < \dots < t_{N^\e}^\e \lqq \frac{T}{\e} \wedge \tau^{\e}
\]
with the following step size 
\begin{equation*}
\Delta_\e := - c T \ln(\e)\qquad \mbox{ for some }c>0.
\end{equation*}
The grid points of the partition are given by $t_n^\e := n \Delta_\e\wedge \tau^\e $ for $0 \lqq n \lqq N_\e$ for $\e \in (0, 1]$ 
with $N_\e = \lfloor \frac{1}{c\e |\ln(\e)|} \rfloor$. 
The term $\delta^h_{x_0}(\e, t)$ can be estimated by the following three sums 
\begin{equation}\label{eq: delta decomposition}
|\delta^h_{x_0}(\e, T)| \lqq |A_1(T, \e)| + |A_2(T, \e)| +|A_3(T, \e)|,  
\end{equation}
where 
\begin{align*}
A_1(T, \e) &:= \e \sum_{n=0}^{N_\e} \int_{t_n}^{t_{n+1}} [h(X^\e_{s}(x_0)) -h(X_{s-t_n}(X^\e_{t_n}(x_0)))] ~ds, \\[2mm]
A_2(T, \e) &:= \e \sum_{n=0}^{N_\e} \int_{t_n}^{t_{n+1}} 
[h(X_{s-t_n}(X^\e_{t_n}(x_0))) - \Delta_\e Q(\pi(X^\e_{t_n}(x_0)))] 
~ds, \\[2mm]
A_3(T, \e) &:=\sum_{n=0}^{N_\e}\e \Delta_\e Q(\pi(X^\e_{t_n}(x_0))) - \int_{0}^{t_{N_\e+1}} \e Q(\pi(X^\e_{\frac{s}{\e}}(x_0))) ~ds. \\[2mm]
\end{align*}
The following lemmas estimate the preceding terms one-by-one. 
For convenience of the reader we number the constants $C_i$. 

\begin{lem}\label{Lemma1a}
For any $\la\in (0,\frac{p-1}{p})$ there exist positive constants $b_2>0$ and $\e_0\in (0,1]$ 
such that for any $\e\in (0, \e_0]$ and $T\gqq 0$ 
\begin{align*}
\left(\EE\left[\sup_{s\in [0, T]} |A_1(s, \e)|^p \right]\right)^{\frac{1}{p}} \lqq b_2 T \e^\la.
\end{align*}
\end{lem}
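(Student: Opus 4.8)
The plan is to estimate $A_1(s,\e)$ interval-by-interval using the nested structure of the partition, then reduce the whole sum to a single application of Corollary \ref{cor: preliminary}. First I would observe that on each subinterval $[t_n, t_{n+1}]$ the process $X^\e_{\cdot}$ started at $X^\e_{t_n}$ is being compared, after shifting time by $t_n$, with the unperturbed process $X_{\cdot}$ started at the \emph{same} point $X^\e_{t_n}$; by the (strong) Markov property of both $X^\e$ and $X$ and the flow property, the increment $h(X^\e_{s}(x_0)) - h(X_{s-t_n}(X^\e_{t_n}(x_0)))$ over $s\in[t_n, t_{n+1}]$ has, conditionally on $\fF_{t_n}$, the same law as $h(X^\e_{\sigma}(y)) - h(X_{\sigma}(y))$ with $\sigma = s - t_n \in [0, \Delta_\e]$ and $y = X^\e_{t_n}(x_0)$. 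Hence, pulling the $\e$ and the factor $\Delta_\e$ out,
\begin{align*}
\sup_{s\in[0,T]} |A_1(s,\e)|
&\lqq \e \sum_{n=0}^{N_\e} \int_{t_n}^{t_{n+1}} |h(X^\e_s(x_0)) - h(X_{s-t_n}(X^\e_{t_n}(x_0)))|\, ds\\
&\lqq \e \Delta_\e \sum_{n=0}^{N_\e} \sup_{\sigma\in[0,\Delta_\e]} |h(X^\e_\sigma(X^\e_{t_n}(x_0))) - h(X_\sigma(X^\e_{t_n}(x_0)))|.
\end{align*}

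Next I would take $L^p$ norms. By Minkowski's inequality applied to the sum over $n$, together with the conditional-law identification above and the tower property,
\begin{align*}
\Big(\EE\big[\sup_{s\in[0,T]}|A_1(s,\e)|^p\big]\Big)^{1/p}
\lqq \e \Delta_\e \sum_{n=0}^{N_\e} \Big(\EE\big[\,\sup_{\sigma\in[0,\Delta_\e]} |h(X^\e_\sigma(y)) - h(X_\sigma(y))|^p \big]\big|_{y = X^\e_{t_n}(x_0)}\Big)^{1/p}.
\end{align*}
Now apply Corollary \ref{cor: preliminary} with the time horizon $T_\e = c_\la |\ln(\e)|$: choosing $c$ in the step size $\Delta_\e = -cT\ln(\e)$ so that $\Delta_\e \lqq c_\la|\ln(\e)| = T_\e$ for $T\in[0,1]$ (this fixes the constant $c$ in terms of $c_\la$ from Corollary \ref{cor: preliminary}), the corollary gives, uniformly in the starting point $y$ in the relevant coordinate chart, the bound $k_4 \e^\la$ on each summand — with the caveat that $k_4$ depends on $\bar\eta(y)$ through $k_1$; this is where I would invoke Hypothesis 3, part 2 (integrability of $\bar\eta$ against $\mu_{x_0}$) and the $L^p$ boundedness of the horizontal component to control $\EE[k_1(X^\e_{t_n}(x_0))]$ by a constant uniform in $n$ and $\e$.

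Assembling the pieces: there are $N_\e + 1 \lqq \frac{1}{c\e|\ln(\e)|} + 1$ terms, each of size $\lqq \mathrm{const}\cdot \e^\la$, multiplied by the prefactor $\e\Delta_\e = c T \e |\ln(\e)|$. Thus
\begin{align*}
\Big(\EE\big[\sup_{s\in[0,T]}|A_1(s,\e)|^p\big]\Big)^{1/p}
\lqq \e\Delta_\e \cdot (N_\e+1)\cdot \mathrm{const}\cdot \e^\la
\lqq cT\e|\ln(\e)| \cdot \frac{2}{c\e|\ln(\e)|}\cdot \mathrm{const}\cdot \e^\la
= b_2 T \e^\la,
\end{align*}
for $\e$ small enough that $N_\e + 1 \lqq 2N_\e$, which is exactly the claimed bound. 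The main obstacle I anticipate is not the counting — which is clean — but the uniformity of the constant $k_4$ over the random initial points $X^\e_{t_n}(x_0)$: one must check that Corollary \ref{cor: preliminary} can be applied with $y$ ranging over (a neighborhood of) the leaf and that the resulting $x_0$-dependence $k_1(y)\lqq k_3(1+\bar\eta(y))$ integrates to something finite and $\e$-independent, for which Hypothesis 3(2) and the moment bounds on $X^\e$ in the chart $U$ (valid up to the exit time $\tau^\e$, which is why the integrals in $\delta^h_{x_0}$ are cut at $\e\tau^\e$) are exactly what is needed. A secondary bookkeeping point is that the exponent range $\la\in(0,\frac{p-1}{p})$ in the lemma is weaker than $\frac{p-1}{p^2}$ appearing in Corollary \ref{cor: preliminary}, so after the loss of a power from the prefactor $\e|\ln(\e)|\lqq \e^{1-\delta}$ the statement still goes through for any $\la$ below $\frac{p-1}{p^2}$, hence a fortiori in the stated range once one absorbs the logarithm.
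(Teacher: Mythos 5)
Your decomposition (partitioning into intervals of length $\Delta_\e$, applying the Markov/flow property to recenter each interval, invoking Corollary \ref{cor: preliminary}, then counting) is exactly the structure of the paper's proof, and your counting at the end is correct. However, there is a genuine gap at the step you yourself flag as ``the main obstacle'': the uniform control, over $n = 0, \dots, N_\e$, of $\EE[k_4(X^\e_{t_n}(x_0))] \lqq k_3 k_5\big(1 + \EE[\bar\eta(X^\e_{t_n}(x_0))]\big)$. Your proposed resolution --- invoking Hypothesis 3(2) together with ``$L^p$ boundedness of the horizontal component'' and ``moment bounds on $X^\e$ in the chart $U$'' --- is not enough, because Hypothesis 3(2) gives integrability of $\bar\eta$ against $\mu_{x_0}$, which controls $\EE[\bar\eta(X_{t_n}(x_0))]$ for the \emph{unperturbed, foliated} process $X$, not $\EE[\bar\eta(X^\e_{t_n}(x_0))]$ for the perturbed process. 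The perturbed process wanders off the leaf $L_{x_0}$, so one cannot invoke the invariant measure directly; one must bound the gap $\EE[|X^\e_{t_n}(x_0) - X_{t_n}(x_0)|^p]^{1/p}$ and use the Lipschitz continuity of $\bar\eta$.

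The missing idea in the paper is a \emph{recursion along the grid}: setting $\psi_n := \EE[|X^\e_{t_n}(x_0) - X_{t_n}(x_0)|^p]^{1/p}$, one splits $X^\e_{t_n}(x_0) - X_{t_n}(x_0)$ through the intermediate point $X_{t_n - t_{n-1}}(X^\e_{t_{n-1}}(x_0))$, uses Proposition \ref{lem: preliminary}/Corollary \ref{cor: preliminary} on $[0, \Delta_\e]$ for the first half, and a flow-stability estimate for the unperturbed equation (Kunita's Theorem 3.2, giving $\EE[|X_{t_1}(x_1) - X_{t_1}(x_2)|^p] \lqq C\e^\la |x_1 - x_2|^p$ on the time scale $\Delta_\e$) for the second half. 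This yields $\psi_n \lqq C_7\e^\la \psi_{n-1} + C_7\e^\la$, which is geometrically contractive for $\e$ small and hence gives $\psi_n = O(\e^\la)$ \emph{uniformly in $n$}. Without this contraction, the naive bound per step would accumulate over the $N_\e \sim 1/(\e|\ln\e|)$ grid points and blow up. So the recursion is not bookkeeping but is precisely what makes the uniform constant $\e$-independent; it should be stated explicitly rather than absorbed into an appeal to moment bounds. Finally, your remark on the exponent range $\la \in (0, \frac{p-1}{p})$ versus $\frac{p-1}{p^2}$ has the inequality backwards: since $\frac{p-1}{p} > \frac{p-1}{p^2}$, the lemma as stated claims a \emph{larger} range than Corollary \ref{cor: preliminary} supports, and the argument actually establishes the bound only for $\la < \frac{p-1}{p^2}$; the wider range in the lemma's statement appears to be a typo, not something recoverable ``a fortiori.''
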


\begin{proof}
Using the Markov property analogously to \cite{dCH17} and Corollary \ref{cor: preliminary} we obtain 
\begin{align*}
&\EE\Big[\sup_{[0, T]}A_1(s, \e)|^p\Big]^\frac{1}{p} \\
&= \e \sum_{n=0}^{N_\e-1} \EE\Big[\EE\Big[|\int_{t_n}^{t_{n+1}} |h(X^\e_{s}(x_0)-h(X_{s-t_n}(X^\e_{t_n}(x_0))ds|^p~|~\fF_{t_n}\Big]\Big]^\frac{1}{p} \\
&= \e \sum_{n=0}^{N_\e-1} \EE\Big[\EE\Big[|\int_{t_n}^{t_{n+1}} |h(X^\e_{s-t_n}(y))-h(X_{s-t_n}(y)ds|^p~|~y=X^\e_{t_n}(x_0)\Big]\Big]^\frac{1}{p} \\
&\lqq \e (N_\e +1) \Delta_\e \max_{n=0, \dots, N_\e} \EE\Big[\EE\Big[|\sup_{s\in [0, t_1]} |h(X^\e_{s}(y))-h(X_{s-t_n}(y)|^p~|~y=X^\e_{t_n}(x_0)\Big]\Big]^\frac{1}{p}\\
&\lqq T \e^\la \max_{n= 0, \dots, N_\e} \EE\big[k_4(X^\e_{t_n}(x_0))\big].  
\end{align*}
Note that by Corollary (\ref{cor: preliminary}) we have 
\[
\max_{n= 0, \dots, N_\e} \EE\big[k_4(X^\e_{t_n}(x_0))\big] \lqq k_3 k_5\big(1+  \max_{n= 0, \dots, N_\e}\EE\big[\bar \eta(X^\e_{t_n}(x_0))\big]\big).
\]
It remains to bound the last summand. We estimate as follows for any $n\in \NN$ 
\begin{align*}
&\EE\big[\bar \eta(X^\e_{t_n}(x_0))\big] \\
&\lqq \EE\big[\bar \eta(X^\e_{t_n}(x_0)) - \bar \eta(X_{t_n}(x_0))\big] + \EE\Big[\bar \eta(X_{t_n}(x_0))\Big]\\
&\lqq \bar \ell\, \EE\big[|X^\e_{t_n}(x_0) - X_{t_n}(x_0)|\big] + \int \bar \eta(y) \mu_{x_0}(dy) + 
\sup_{t\gqq 0} \EE\Big[|\int \bar \eta(y) \mu_{x_0}(dy) - \bar \eta(X_{t}(x_0))|\Big]\\
&\lqq \bar \ell\, \EE\big[|X^\e_{t_n}(x_0) - X_{t_n}(x_0)|^p\big]^\frac{1}{p} + C_1.
\end{align*}
For the first term in the preceding expression we derive a recursion formula. 
Using Theorem 3.2 in Kunita \cite{Ku04} it yields for the horizontal component 
\[
\EE\Big[\sup_{s\in 0, T]} |X_s(x_1) - X_s(x_2)|^p \Big] \lqq e^{\ell Lip(\phi^{-1}) T} |x_1 - x_2|^p,
\]
which implies the inequality 
\[
\EE\Big[|X_{t_1}(x_1) - X_{t_1}(x_2)|^p \Big] \lqq C_2 \e^\la |x_1 - x_2|^p. 
\]
We estimate 
\begin{align*}
&\EE\big[|X^\e_{t_n}(x_0) - X_{t_n}(x_0)|^p\big]^\frac{1}{p} \\
&\lqq \EE\big[|X^\e_{t_n}(x_0) - X_{t_n-t_{n-1}}(X^\e_{t_{n-1}}(x_0))|^p\big]^\frac{1}{p} 
+ \EE\big[|X_{t_n}(X^\e_{t_{n-1}}(x_0)) - X_{t_n}(x_0))|^p\big]^\frac{1}{p}\\
&\lqq C_3 \e^\la \EE\big[k_4(X^\e_{t_{n-1}}(x_0))\big] + C_2 \e^\la \EE\big[|X^\e_{t_{n-1}}(x_0) - X_{t_{n-1}}(x_0)|^p\big]^\frac{1}{p}\\
&\lqq C_4 \e^\la \Big(1 + \EE\big[\bar \eta(X^\e_{t_{n-1}}(x_0))\big]\Big)+ C_2 \e^\la \EE\big[|X^\e_{t_{n-1}}(x_0) - X_{t_{n-1}}(x_0)|^p\big]^\frac{1}{p}\\
&\lqq C_4 \e^\la \Big(C_1 + \bar \ell\, \EE\big[|X^\e_{t_{n-1}}(x_0) - X_{t_{n-1}}(x_0)|^p\big]^\frac{1}{p}\Big)
+ C_2 \e^\la \EE\big[|X^\e_{t_{n-1}}(x_0) - X_{t_{n-1}}(x_0)|^p\big]^\frac{1}{p}\\
&= C_5 \e^\la \EE\big[|X^\e_{t_{n-1}}(x_0) - X_{t_{n-1}}(x_0)|^p\big]^\frac{1}{p} +C_6 \e^\la.
\end{align*}
That is, for $\psi_n := \EE\big[|X^\e_{t_n}(x_0) - X_{t_n}(x_0)|^p\big]^\frac{1}{p}$ we have then 
\begin{align*}
\psi_n \lqq  C_7 \e^\la \psi_{n-1} + C_7 \e^\la, 
\end{align*}
which gives the following estimate for any $n\in \NN$ and $k\in \{1, \dots, n\}$ 
\begin{align*}
\psi_n \lqq  (C_7 \e^\la )^{n-k}  + \sum_{i=1}^{n-k} (C_7 \e^\la )^i. 
\end{align*}
For $C_7 \e_0^\la  < \frac{1}{2}$ we obtain for any $n\in \NN$ the estimate 
\begin{align*}
\psi_n 
&\lqq  C_7 \e^\la  + \sum_{i=1}^{\infty} (C_7 \e^\la)\lqq  3 C_7 \e^\la < \infty .
\end{align*}
Under these assumptions, we obtain for all $\e \in (0, \e_0]$ 
\begin{align}
\max_{n= 0, \dots, N_\e}\EE\big[\bar \eta(X^\e_{t_n}(x_0))\big] 
&\lqq \max_{n= 0, \dots, N_\e} \Big(\bar \ell\, 
\EE\big[|X^\e_{t_n}(x_0) - X_{t_n}(x_0)|^p\big]^\frac{1}{p} + C_1\Big) 
\lqq C_7 \e^\la + C_1 < \infty.
\label{eq: estimate initial value}
\end{align}
Going back to our main estimate, we obtain $C_8>0$ such that 
\begin{align*}
\EE\Big[\sup_{s\in [0, T]}A_1(s, \e)|^p\Big]^\frac{1}{p} 
&\lqq T \e^\la  k_3 k_5 \Big(1 +  \max_{n= 0, \dots, N_\e}\EE\big[\bar \eta(X^\e_{t_n}(x_0))\big]\Big) \lqq C_8 T \e^\la .
\end{align*}
\end{proof}

\begin{lem}\label{Lemma2b} 
For any $\la\in (0,\frac{p-1}{p})$ there exist positive constants $b_3>0$ and $\e_0\in (0,1]$ 
such that for any $\e\in (0, \e_0]$ and $T\gqq 0$ 
\begin{align*}
\left(\EE\left[\sup_{s\in [0, T]} |A_2(s, \e)|^p \right]\right)^{\frac{1}{p}} \lqq b_3 T \eta^0(c T|\ln(\e)|). 
\end{align*}
\end{lem}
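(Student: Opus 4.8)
The plan is to recognize $A_2$ as the term where Hypothesis 3 (the ergodic rate of convergence of the vertical coefficient in $L^p$) is invoked, and to extract the factor $\eta^0$ from it via the Markov property together with a change of variables in the time integral. First I would expand, for each block $[t_n, t_{n+1}]$, the inner integral: since $t_{n+1}-t_n = \Delta_\e$ and $\Delta_\e Q(\pi(X^\e_{t_n}(x_0))) = \int_{t_n}^{t_{n+1}} Q(\pi(X^\e_{t_n}(x_0)))\,ds$ is constant in $s$, the $n$-th summand of $A_2(T,\e)$ equals
\[
\e \int_{t_n}^{t_{n+1}} \big[ h(X_{s-t_n}(X^\e_{t_n}(x_0))) - Q(\pi(X^\e_{t_n}(x_0))) \big]\, ds.
\]
By the strong Markov property, conditioning on $\fF_{t_n}$ and writing $y = X^\e_{t_n}(x_0)$, this is
\[
\e \int_0^{\Delta_\e} \big[ h(X_{r}(y)) - Q^h(\pi(y)) \big]\, dr = \e\, \Delta_\e \Big( \frac{1}{\Delta_\e}\int_0^{\Delta_\e} h(X_r(y))\, dr - Q^h(\pi(y)) \Big),
\]
and here Hypothesis 3, equation (\ref{def: function eta}), gives exactly
\[
\Big(\EE\big[ | \tfrac{1}{\Delta_\e}\int_0^{\Delta_\e} h(X_r(y))\, dr - Q^h(\pi(y)) |^p \,\big|\, y \big]\Big)^{1/p} \lqq \bar\eta(y)\, \eta^0(\Delta_\e),
\]
using that $h$ plays the role of $\pi K$ in the hypothesis (this is where Hypothesis 3 must be read as holding for general Lipschitz $h$, which is how the proposition is stated).

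Next I would take $\sup_{s\in[0,T]}$ and the $L^p$ norm. Using the triangle inequality in $L^p$ across the at most $N_\e+1$ blocks, together with $\e\Delta_\e(N_\e+1) \lqq 2T$ (by the choice $N_\e = \lfloor \tfrac{1}{c\e|\ln\e|}\rfloor$ and $\Delta_\e = -cT\ln\e$), one gets
\[
\Big(\EE\big[\sup_{[0,T]} |A_2(s,\e)|^p\big]\Big)^{1/p} \lqq \e\,\Delta_\e \sum_{n=0}^{N_\e} \Big(\EE\big[\bar\eta(X^\e_{t_n}(x_0))^p\big]\Big)^{1/p} \eta^0(\Delta_\e) \lqq 2T\, \eta^0(\Delta_\e)\, \max_{0\lqq n\lqq N_\e} \big(\EE[\bar\eta(X^\e_{t_n}(x_0))^p]\big)^{1/p}.
\]
Since $\Delta_\e = -cT\ln\e = cT|\ln\e|$, the factor $\eta^0(\Delta_\e)$ is already the claimed $\eta^0(cT|\ln\e|)$. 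It then remains only to bound the moments $\EE[\bar\eta(X^\e_{t_n}(x_0))^p]$ uniformly in $n$ and $\e$; this is precisely the estimate already carried out inside the proof of Lemma \ref{Lemma1a}, culminating in (\ref{eq: estimate initial value}), which combines the Lipschitz continuity of $\bar\eta$, the recursion for $\EE[|X^\e_{t_n}-X_{t_n}|^p]^{1/p}$ via Corollary \ref{cor: preliminary} and the Kunita flow estimate, and Hypothesis 2(2) ($\int \bar\eta\, d\mu_{x_0} < \infty$) together with the ergodic control of $\bar\eta$ along $X$. One should note that (\ref{def: function eta}) as written controls only the first moment $\EE[|\cdots|]$ rather than the $p$-th; if only first moments of $\bar\eta$ along $X^\e_{t_n}$ are available one applies the scalar inequality with $p$ replaced by $1$ inside each block and then uses $\|\cdot\|_{L^1}\lqq\|\cdot\|_{L^p}$ at the level of the sum, which still yields the stated bound with a possibly larger constant $b_3$.

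The main obstacle is bookkeeping rather than any genuine difficulty: one must be careful that the block length entering $\eta^0$ is exactly $\Delta_\e$ (not $\Delta_\e$ truncated at $\tau^\e$), that the monotonicity of $\eta^0$ lets one replace the truncated lengths by $\Delta_\e$ harmlessly, and that the number of blocks times $\e\Delta_\e$ stays bounded by a constant multiple of $T$ uniformly in $\e\in(0,\e_0]$ — this is where the specific choice $N_\e \approx (c\e|\ln\e|)^{-1}$ is used. The uniform moment bound on $\bar\eta(X^\e_{t_n}(x_0))$ is the only substantial input, and it is borrowed wholesale from the proof of Lemma \ref{Lemma1a}, so the argument here is short once that is in hand.
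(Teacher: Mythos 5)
Your proposal follows essentially the same route as the paper's proof: decompose $A_2$ over the $N_\e$ blocks, apply Minkowski's inequality, use the Markov property to condition on $\fF_{t_n}$ and rewrite the block average as $\frac{1}{\Delta_\e}\int_0^{\Delta_\e} h(X_r(y))\,dr - Q^h(\pi(y))$ at $y=X^\e_{t_n}(x_0)$, invoke Hypothesis~3 to extract the factor $\bar\eta(y)\eta^0(\Delta_\e)$, note that $\e\Delta_\e N_\e \approx T$ and that $\eta^0(\Delta_\e)=\eta^0(cT|\ln\e|)$, and finally bound the moments of $\bar\eta$ along $X^\e_{t_n}$ by reusing the recursion and estimate \eqref{eq: estimate initial value} from the proof of Lemma~\ref{Lemma1a}. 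Your side remark that one must be careful about whether a $p$-th moment or a first moment of $\bar\eta(X^\e_{t_n}(x_0))$ is actually produced by the conditioning step is a genuine observation about the bookkeeping, but it does not change the argument's structure and the conclusion stands.
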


\begin{proof} 
We have
\begin{eqnarray*}
 \left(\EE\left[\sup_{s\in [0, T]} |A_2(s, \e)|^p 
\right]\right)^{\frac{1}{p}}    & \lqq & 
 \epsilon\  
  \left[\mathbb{E}  \left|
 \sum^{N_\e-1}_{n=0}[
 \int^{t_{n+1}}_{t_n}{h(X_{s-t_n}(X^{\epsilon}_{t_n}(x_0)))ds}- \Delta_\e 
 Q^h (\pi (X^{\epsilon}_{ t_n}(x_0))) ]  \right|^p\right]^{\frac{1}{p}} \\
 && \\
& \lqq  & \epsilon \Delta_\e  \sum^{N_\e-1}_{n=0}
 \left[\mathbb{E}  \Big|
\frac{1}{\Delta_\e }
\int^{t_{n+1}}_{t_n}{h (X_{s-t_n}(X^{\epsilon}_{t_n}(x_0)))ds}- 
Q (\pi (X^{\epsilon}_{ t_n}(x_0)))  \Big|^p\right]^{\frac{1}{p}}.
\end{eqnarray*}
We apply the Markov property for all $n=0, \ldots, N_\e$. 
By Hypothesis 3 the two terms inside the
modulus converge to each other when 
$\Delta_\e$ goes to infinity with rate of 
convergence bounded by $\bar \eta(X^\e_{t_n}(x_0)) \eta^0 (\Delta_\e )$. 
Hence, for small $\epsilon$
we have
\begin{align*}
 \left(\EE\left[\sup_{s\in [0,  T]} |A_2(s, \e)|^p \right]\right)^{\frac{1}{p}}
& \lqq \e N_\e \, \Delta_\e \, \eta^0(\Delta_\e) 
\max_{n=0, \dots, N_\e} \EE[\bar \eta(X^{\epsilon}_{ t_n}(x_0))] \\
& \lqq  T \eta^0(c T|\ln(\e)|) \max_{n=0, \dots, N_\e}  \EE[\bar \eta(X^{\e}_{t_n}(x_0))].
\end{align*}
Therefore, using (\ref{eq: estimate initial value}), we obtain for $\e \in (0, \e_0]$ the estimate 
\begin{align*}
 \left(\EE\left[\sup_{s\in [0,  T	]} |A_2(s, \e)|^p \right]\right)^{\frac{1}{p}}
&\lqq  C_8 T \eta^0(c T|\ln(\e)|) .
\end{align*}
\end{proof}

\begin{lem}\label{Lemma2c}
For any $\la\in (0,\frac{p-1}{p})$ there exist positive constants $b_4>0$ and $\e_0\in (0,1]$ 
such that for any $\e\in (0, \e_0]$ and $T\gqq 0$ 
\begin{align*} 
\left(\EE\left[\sup_{s\in [0, T]} |A_3(s,\e)|^p \right]\right)^{\frac{1}{p}} \lqq 
b_4 T \e^\la.
\end{align*}
\end{lem}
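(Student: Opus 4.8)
The plan is to recognise that the Riemann‑sum error $A_3$ coincides, up to sign, with the term $A_1$ of the decomposition~\eqref{eq: delta decomposition} in which the integrand $h$ is replaced by $g:=Q^h\circ\pi$, and then to run the proof of Lemma~\ref{Lemma1a} for this $g$. What makes this identification possible is that $g$ is \emph{constant along the trajectories of the unperturbed foliated system}: by the support theorem recorded after Theorem~\ref{thm: welldefined}, the unperturbed process $X_\cdot(y)$ started at $y\in U$ remains on the leaf through $y$, which inside the chart is the slice $\phi^{-1}(L_{x_0}\times\{\pi(y)\})\subseteq U$; hence $X_\cdot(y)$ never leaves $U$ and its transversal coordinate stays frozen, $\pi(X_u(y))=\pi(y)$, so that $g(X_u(y))=g(y)$ for all $u\gqq 0$. (For $y=x_0$ this is exactly the identity $v_s\equiv 0$ used in the proof of Proposition~\ref{lem: preliminary}; here it is used at the moving base point $y=X^\e_{t_n}(x_0)$.)

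Concretely, on the grid $t_n=n\Delta_\e\wedge\tau^\e$ one has the pathwise bound
\[
\sup_{s\in[0,T]}|A_3(s,\e)|\;\lqq\;\e\sum_{n=0}^{N_\e}\int_{t_n}^{t_{n+1}}\big|Q^h(\pi(X^\e_{t_n}(x_0)))-Q^h(\pi(X^\e_r(x_0)))\big|\,dr ,
\]
and on $\{t_n<\tau^\e\}$ (the only terms that contribute) the frozen‑vertical identity gives $Q^h(\pi(X^\e_{t_n}))=Q^h(\pi(X_{r-t_n}(X^\e_{t_n})))=g(X_{r-t_n}(X^\e_{t_n}))$, so that the right‑hand side is precisely $|A_1(\cdot,\e)|$ with $h$ replaced by $g$. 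Since $Q^h$ is globally Lipschitz on $V$ by Hypothesis~2 and $\pi$ is uniformly Lipschitz on $U$ by item~b., $g$ is globally Lipschitz on $M$ (after a Lipschitz extension, exactly as in Lemma~\ref{Lemma1a}), with $Lip(g)\lqq Lip(Q^h)\,Lip(\pi)$; hence the proof of Lemma~\ref{Lemma1a}, which uses nothing about its integrand beyond global Lipschitz continuity, applies verbatim with $g$ in place of $h$. In detail, Minkowski's inequality in $L^p(\Omega)$ together with the Markov property at each $t_n$ (using $t_{n+1}-t_n\lqq\Delta_\e$ and $\Delta_\e=cT|\ln\e|\lqq c_\la|\ln\e|=T_\e$, which holds since $c\lqq c_\la$ and $T\lqq 1$) reduces the problem to the single‑window estimate of Corollary~\ref{cor: preliminary} for $g$,
\[
\Big(\EE\big[\sup_{r\in[0,\Delta_\e]}|g(X^\e_r(y))-g(X_r(y))|^p\big]\Big)^{1/p}\;\lqq\;k_4(y)\,\e^\la ,\qquad k_4(y)\lqq C\big(1+\bar\eta(y)\big),
\]
uniformly in $y\in U$; evaluating at $y=X^\e_{t_n}(x_0)$, taking expectations, inserting the a priori bound $\max_{0\lqq n\lqq N_\e}\EE[\bar\eta(X^\e_{t_n}(x_0))]\lqq C_7\e^\la+C_1$ from~\eqref{eq: estimate initial value}, and using $\e\,\Delta_\e\,(N_\e+1)=cT|\ln\e|\,\e\big(\lfloor(c\e|\ln\e|)^{-1}\rfloor+1\big)\lqq T+cT\e|\ln\e|\lqq 2T$ for $\e$ small, one arrives at
\[
\Big(\EE\big[\sup_{s\in[0,T]}|A_3(s,\e)|^p\big]\Big)^{1/p}\;\lqq\;b_4\,T\,\e^\la ,
\]
with $b_4$ depending on $x_0$ only through $C_1$, $C_7$ and the Lipschitz constants of $Q^h$ and $\pi$.

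The only genuine difficulty is the reduction carried out in the first two paragraphs. Over a window of length $\Delta_\e\sim|\ln\e|$ the horizontal part of $X^\e$ moves an $O(1)$ amount, so $|X^\e_r-X^\e_{t_n}|$ is \emph{not} small and a crude bound on the Riemann error fails; the resolution is that $Q^h\circ\pi$ feels only the transversal coordinate, which the unperturbed flow keeps frozen, and this turns the Riemann error into exactly the comparison between $X^\e$ and $X$ already quantified by Corollary~\ref{cor: preliminary}. Avoiding this observation --- estimating $|\pi(X^\e_r)-\pi(X^\e_{t_n})|$ directly from the transversal equation~\eqref{eq: transversale SDE in koordinaten} --- is feasible but would demand a separate per‑window ergodic/Gronwall argument to control $\e\int_{t_n}^{t_{n+1}}|\fK_V(u^\e_\rho,v^\e_\rho)|\,d\rho$, where the non‑compactness of the leaf makes $\fK_V$ unbounded. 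The passage through the random initial condition $X^\e_{t_n}(x_0)$ is handled, as in Lemmas~\ref{Lemma1a} and~\ref{Lemma2b}, by~\eqref{eq: estimate initial value}.
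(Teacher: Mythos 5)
Your proof is correct and rests on exactly the same mechanism as the paper's: the Lipschitz bound on $Q^{\pi K}$ reduces the Riemann error to $\sum_n\sup_{t_n\lqq s<t_{n+1}}|v^\e_s-v^\e_{t_n}|$, which — because the unperturbed flow freezes the vertical coordinate — is the single-window $X^\e$-versus-$X$ comparison of Corollary~\ref{cor: preliminary}, applied at the random initial points $X^\e_{t_n}(x_0)$ and controlled via the Markov property and the a priori bound~\eqref{eq: estimate initial value}. Your repackaging of the Riemann error as an $A_1$-type term for the Lipschitz function $g=Q^h\circ\pi$ and your direct appeal to Lemma~\ref{Lemma1a} is a clean way to make that reduction explicit, but it is the same argument the paper carries out, only more directly, via $|v^\e_s-v^\e_{t_n}|=|v^\e_{s-t_n}(y)-v_{s-t_n}(y)|$ under the Markov conditioning.
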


\begin{proof} 
We calculate 
\begin{align}
|A_3(T,\e)| &=  \Big| \sum_{n=0}^{N_\e} \e \Delta_\e Q^{\pi K}(\pi(X^\e_{t_n}))
- \int_{0}^{N_\e \Delta_\e} \e Q^{\pi K}(\pi(X^\e_{\frac{s}{\e}})) ~ds \Big|\nonumber\\
& \lqq \e \sum_{n=0}^{N_\e} \Delta_\e \sup_{t_n \lqq s < t_{n+1}}  
|Q^{\pi K}(\pi(X^\e_s)) - Q^{\pi K}(\pi(X^\e_{t_n}))|\nonumber\\
& \lqq \e \Delta_\e C_1 \sum_{n=0}^{N_\e} \sup_{t_n \lqq s < t_{n+1}} |v^\e_s - v^\e_{t_n}|. \label{eq: A3 estimate}
\end{align}
By Minkowski's inequality, the Markov property, Proposition \ref{prp: componente vertical} 
and (\ref{eq: estimate initial value}) (with the appropriate constant $C_8$) we have hat 
\begin{align*}
\EE[\sup_{s\in [0, T]} |A_3(s,\e)|^p]^\frac{1}{p}
& \lqq T C_1 \max_{n\in \{0, \dots, N_\e\}} 
\EE[\EE[\sup_{t_n \lqq s < t_{n+1}} |v^\e_{s-t_{n}}(y) - v^\e_{0}(y)|^p~|~y = \fF_{t_n}]]^\frac{1}{p}\\
& \lqq T C_1 \max_{n\in \{0, \dots, N_\e\}} 
\EE[\EE[\sup_{t_0 \lqq s < t_{1}} |v^\e_{s}(y) - v^\e_{0}(y)|^p~|~y = X_{t_n}^\e(x_0)]]^\frac{1}{p}\\
& \lqq T C_2 \e^\la \EE\Big[k_4(X^\e_{t_n}(x_0))\Big]  \\
& \lqq T C_2 C_8 \e^\la.
\end{align*} 
\end{proof}

This ends the proof of Proposition \ref{prp: componente vertical}.

\paragraph{Proof of the main Theorem \ref{thms: main result 2}: } 
With the help of Proposition \ref{prp: componente vertical}, the proof of 
Theorem \ref{thms: main result 2} is identical the one given in Section 5 of \cite{dCH17}. 

\bigskip

\section{Appendix: Nonlinear comparison principle}\label{sec: appendix1}

\begin{prp}[Pachpatte \cite{Pa}]\label{prop: Pachpatte}
Let $u, f, g$ and $h$ be nonnegative continuous functions defined on $\RR^+$. 
Let $v$ be a continuous non-decreasing subadditive and submultiplicative function 
defined on $\RR^+$ and $v(u) >0$ on $(0, \infty)$. 
Let $e, \phi$ be continuous and nondecreasing functions defined on $\RR^+$ 
with $p$ being strictly positive and $\phi(0) = 0$. 
If 
\begin{align*}
u(t) \lqq e(t) + g(t) \int_0^t f(s) u(s) ds + \phi\Big(\int_0^t h(s) v(u(s)) ds\Big)  
\end{align*}
for all $t\gqq 0$, then for any $0 \lqq t \lqq t_2$ 
\begin{align*}
u(t) \lqq a(t) \Big[e(t) + \phi\Big(F^{-1}\big(F(A(t)) + \int_0^t h(s) v(a(s)) ds \big)\Big)\Big],
\end{align*}
where 
\begin{align*}
a(t) &:= 1+ g(t) \int_0^t f(s) \exp\Big(\int_s^t g(\si) f(\si) d\si\Big)ds,\\
A(t) &:= \int_0^t h(s) v(a(s) e(s)) ds,\\
F(t) &:= \int_{0}^t \frac{ds}{v(\phi(s))}, 
\end{align*}
$F^{-1}$ is the inverse of $F$ and $t_2\in \RR^+$ such that 
\[
F(A(t)) +  \int_0^t h(s) v(a(t)) ds \in \dom(F^{-1}) \qquad \mbox{ for all }0\lqq t\lqq t_2.
\]
\end{prp}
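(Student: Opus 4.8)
The plan is to reduce this mixed linear-plus-nonlinear integral inequality to a classical Bihari inequality in two stages. First I would absorb the linear term $g(t)\int_0^t f(s)u(s)\,ds$ by a variable-coefficient Gronwall argument, and then treat the surviving nonlinear term by a separation-of-variables (Bihari) argument applied to the auxiliary quantity $w(t):=\int_0^t h(s)v(u(s))\,ds$. Throughout I would use that $e$, $\phi$, $v$ are nondecreasing, so that $w$ and the various right-hand sides built from it are nondecreasing in $t$, which is what makes the monotone comparisons below legitimate.

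\emph{Step 1 (linear Gronwall with variable coefficient).} Set $n(t):=e(t)+\phi(w(t))$, which is nonnegative, continuous and nondecreasing, so the hypothesis reads $u(t)\le n(t)+g(t)\int_0^t f(s)u(s)\,ds$. Writing $m(t):=\int_0^t f(s)u(s)\,ds\in C^1$, one gets $m'(t)=f(t)u(t)\le f(t)n(t)+f(t)g(t)m(t)$; multiplying by the integrating factor $\exp\!\big(-\int_0^t f(\sigma)g(\sigma)\,d\sigma\big)$, integrating from $0$ (where $m=0$), and bounding $n(s)\le n(t)$ gives $m(t)\le n(t)\int_0^t f(s)\exp\!\big(\int_s^t g(\sigma)f(\sigma)\,d\sigma\big)\,ds$. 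Substituting back, $u(t)\le n(t)+g(t)m(t)\le a(t)\,n(t)$, i.e. $u(t)\le a(t)\big[e(t)+\phi(w(t))\big]$.

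\emph{Step 2 (Bihari).} Apply $v$ to the previous line; since $v$ is nondecreasing, subadditive and submultiplicative, $v(u(t))\le v(a(t)e(t))+v(a(t))\,v(\phi(w(t)))$. Multiplying by $h(t)\ge0$ and integrating yields $w(t)\le A(t)+\int_0^t h(s)v(a(s))v(\phi(w(s)))\,ds$. For fixed $t_1\in[0,t_2]$ and $t\le t_1$, I would replace $A(t)$ by the larger constant $A(t_1)$, call the resulting right side $r(t)$, so that $w\le r$, $r(0)=A(t_1)$, and $r'(t)\le h(t)v(a(t))v(\phi(r(t)))$; dividing by $v(\phi(r(t)))>0$ and integrating gives $F(r(t))\le F(A(t_1))+\int_0^t h(s)v(a(s))\,ds$, whence $r(t)\le F^{-1}\big(F(A(t_1))+\int_0^t h(s)v(a(s))\,ds\big)$ precisely as long as the argument stays in $\dom(F^{-1})$, which is the restriction $t\le t_2$. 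Evaluating at $t=t_1$, renaming $t_1$ as $t$, and inserting the resulting bound for $w(t)$ into the Step 1 inequality (using that $\phi$ is nondecreasing) produces the claimed estimate.

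\emph{Expected main obstacle.} The genuinely delicate point is the division by $v(\phi(r(t)))$, which needs $r(t)>0$; this fails exactly when $A\equiv 0$ on the interval in question. I would handle this by first establishing the inequality with $A(t_1)+\varepsilon$ in place of $A(t_1)$ and letting $\varepsilon\downarrow 0$, using continuity of $F$ and $F^{-1}$. One must also keep the monotonicity bookkeeping straight ($n$ and $r$ nondecreasing, $v\circ\phi$ nondecreasing, so that $w\le r$ upgrades to the differential inequality for $r$) and verify that $1/v(\phi(\cdot))$ is integrable on the relevant range so that $F$ and its inverse are well defined; all of this is routine. The only substantive manoeuvre is recognizing that the subadditivity and submultiplicativity of $v$ are exactly what turns $a(t)\big[e(t)+\phi(w(t))\big]$ into the additively split form that closes the Bihari loop.
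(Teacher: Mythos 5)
The paper cites this result from Pachpatte (Theorem 2.4.2 in \cite{Pa}) without giving its own proof, so there is nothing in the text to compare against directly; the later Corollary only gestures at adapting Pachpatte's argument. Your reconstruction is correct and matches the standard two-stage treatment: absorb the linear $g(t)\int_0^t f u$ term by a variable-coefficient Gronwall argument to get $u\lqq a\cdot\big(e+\phi(w)\big)$, then exploit subadditivity and submultiplicativity of $v$ to close a Bihari-type differential inequality for the auxiliary integral $w(t)=\int_0^t h\,v(u)$. The bookkeeping you do -- freezing $A(t_1)$ so that $r$ is nondecreasing and $w\lqq r$ can be upgraded to $r'\lqq h\,v(a)\,v(\phi(r))$, then separating variables -- is exactly what makes the argument go through, and your remark about regularizing with $A(t_1)+\varepsilon$ to avoid dividing by $v(\phi(0))=v(0)$, which may vanish, addresses the one genuinely delicate point. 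Two small remarks: the ``$p$ strictly positive'' in the statement is a transcription slip (there is no $p$ in this proposition), and the paper's domain condition writes $v(a(t))$ inside the integral where it should read $v(a(s))$, consistent with the displayed bound and with what your derivation produces.
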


In the following special case of coefficients 
it is possible to drop the continuity assumption on $u$.

\begin{cor}\label{lem: nonlinear comparison}
Let $\Psi$ a non-negative, measurable, increasing function 
and $h$ be nonnegative, continuous, increasing 
function on the interval $[0, T]$ satisfying 
for $p\gqq 2$, $\e>0$, $c>0$ and any $t\in [0, T]$ 
the inequality
\begin{equation}\label{eq: nonlinear comparison}
\Psi(t) \lqq \e c t^p + \e c \Big(\int_0^t \Psi(s) + \Psi(s)^\frac{p-1}{p} ds\Big), \qquad t\in [0, T]. 
\end{equation}
Then there is a constant $k>0$ such that for any $\e_0 \in (0, 1]$ such that $\e_0 T < k$ we have for all 
$t\in [0, T]$ and $\e \in (0, \e_0]$ 
\begin{align*}
\Psi(t) \lqq C \Big(\e t^p+t^p  (\e t)^\frac{p-1}{p}\Big).
\end{align*}
\end{cor}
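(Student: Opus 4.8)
The plan is to obtain the bound as a direct consequence of the Pachpatte inequality in Proposition~\ref{prop: Pachpatte}, after first circumventing its continuity hypothesis on $u$. Since $\Psi$ is increasing it is finite and locally bounded on $[0,T)$ (otherwise one runs the argument on $[0,t]$ and lets $t\uparrow T$), so the function
\[
G(t):=\e c\,t^p+\e c\int_0^t \Psi(s)\,ds+\e c\int_0^t \Psi(s)^{\frac{p-1}{p}}\,ds
\]
is well defined and \emph{continuous} on $[0,T]$, and (\ref{eq: nonlinear comparison}) is precisely the pointwise estimate $\Psi\lqq G$. Because $x\mapsto x^{(p-1)/p}$ is nondecreasing, substituting $\Psi\lqq G$ back into the two integrals shows that $G$ satisfies the \emph{same} integral inequality with $G$ on both sides:
\[
G(t)\lqq \e c\,t^p+\e c\int_0^t G(s)\,ds+\e c\int_0^t G(s)^{\frac{p-1}{p}}\,ds.
\]
This is a Pachpatte-type inequality with $u=G$, $e(t)=\e c\,t^p$, $g\equiv h\equiv \e c$, $f\equiv 1$, $\phi=\mathrm{id}$ and $v(x)=x^{(p-1)/p}$. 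I would then verify the hypotheses of Proposition~\ref{prop: Pachpatte}: $v$ is continuous, nondecreasing and positive on $(0,\infty)$, and for $p\gqq 2$ the exponent $(p-1)/p\in[\tfrac12,1)$ makes $v$ subadditive ($(a+b)^{\al}\lqq a^{\al}+b^{\al}$ for $\al\in(0,1]$) and submultiplicative (with equality); moreover $F(t)=\int_0^t s^{-(p-1)/p}\,ds=p\,t^{1/p}$ is an increasing bijection of $[0,\infty)$, so $\dom(F^{-1})=[0,\infty)$ and the admissibility range in Proposition~\ref{prop: Pachpatte} is all of $[0,T]$.

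It then remains to evaluate and estimate the explicit quantities appearing in Pachpatte's conclusion. With $g=f\equiv\e c$ one gets $a(t)=1+\e c\int_0^t e^{\e c(t-s)}\,ds=e^{\e c t}$; choosing the threshold $k>0$ with $kc\lqq\ln 2$ ensures $a(t)\lqq 2$ for $\e\in(0,\e_0]$ and $t\in[0,T]$, which is where the smallness of $\e_0 T$ is used. Using $a(s)e(s)\lqq 2\e c\,s^p$ and $e^{\e c s(p-1)/p}\lqq 2$ I would bound
\[
A(t)=\e c\int_0^t (a(s)e(s))^{\frac{p-1}{p}}\,ds\lqq \kappa_1\,\e^{\frac{2p-1}{p}}t^p,
\qquad
\int_0^t \e c\, v(a(s))\,ds\lqq 2\e c\,t,
\]
for a constant $\kappa_1=\kappa_1(p,c)$. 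Since $F^{-1}(y)=(y/p)^p$ and $F(A(t))=p\,A(t)^{1/p}$, the argument of $F^{-1}$ is at most $p\kappa_1^{1/p}\e^{\frac{2p-1}{p^2}}t+2\e c\,t$, so by $(a+b)^p\lqq 2^{p-1}(a^p+b^p)$ together with $\e^{\frac{2p-1}{p}}\lqq\e$ and $\e^{p}\lqq\e$ (valid since $\e\lqq1$ and both exponents exceed $1$) one arrives at $F^{-1}(\cdots)\lqq \kappa_2\,\e\,t^p$. Plugging this and $a(t)\lqq2$ into $G(t)\lqq a(t)\big[e(t)+\phi\big(F^{-1}(\cdots)\big)\big]$ yields $\Psi(t)\lqq G(t)\lqq C\,\e\,t^p\lqq C\big(\e\,t^p+t^p(\e t)^{\frac{p-1}{p}}\big)$, which is in fact slightly stronger than the claim.

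The only non-mechanical point is the first step: passing from the merely measurable $\Psi$ to the continuous majorant $G$ that inherits exactly the same inequality, which is what makes Proposition~\ref{prop: Pachpatte} applicable; everything afterwards is the computation of $a$, $A$, $F$ and routine bookkeeping of powers of $\e$. As a sanity check (and an alternative route avoiding Pachpatte) one may instead use monotonicity directly: $\Psi^{\ast}_t:=\sup_{[0,t]}\Psi$ satisfies $\Psi^{\ast}_t\lqq 2\e c\,t^p+2\e c\,t\,(\Psi^{\ast}_t)^{(p-1)/p}$ once $\e_0 T$ is small, and solving this scalar inequality (substituting $\Psi^{\ast}_t=y^p$) already gives $\Psi(t)\lqq C\e t^p$; I would nevertheless present the Pachpatte argument to stay consistent with the cited reference.
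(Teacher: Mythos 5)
Your proof is correct and follows the same Pachpatte-based computation as the paper (same choice of $e,f,g,h,\phi,v$, same $a(t)=e^{\e c t}$ and $F^{-1}(y)=(y/p)^p$), but you circumvent the continuity hypothesis of Proposition~\ref{prop: Pachpatte} in a genuinely different and slicker way. The paper re-runs the proof of Pachpatte's Theorem~2.4.2 internally: it defines the continuous auxiliary function $n(t)=e(t)+\e c\int_0^t h(s)w(u(s))\,ds$, observes that the Gronwall--Bellman step applies to merely measurable, positive $u$ because the resulting integral is absolutely continuous with bounded density, and then lets the remainder of Pachpatte's argument carry through. You instead define the continuous majorant $G(t)=\e c t^p+\e c\int_0^t(\Psi+\Psi^{(p-1)/p})\,ds$, use monotonicity of $x\mapsto x^{(p-1)/p}$ to show $G$ inherits the same inequality, and apply the proposition directly to $G$. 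This avoids having to reopen the cited theorem's proof and is arguably cleaner. Your treatment of $A(t)$ is also tighter: carrying the factor $h(s)=\e c$ inside $A(t)$ (which the paper's displayed computation of $A(t)$ appears to drop, and whose change of variables mixes up the power of $r$) gives $A(t)\lqq\kappa_1\e^{(2p-1)/p}t^p$ and hence the strictly stronger conclusion $\Psi(t)\lqq C\e t^p$, of which the paper's $\Psi(t)\lqq C(\e t^p+t^p(\e t)^{(p-1)/p})$ is a weakening. Both bounds suffice for the downstream use in Proposition~\ref{lem: preliminary}. The supplementary remark about working directly with $\Psi^*_t=\sup_{[0,t]}\Psi$ and the scalar inequality $\Psi^*_t\lqq 2\e ct^p+2\e ct(\Psi^*_t)^{(p-1)/p}$ is a valid and even more elementary alternative, though it does depart from the cited reference as you note.
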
 

\begin{proof} 
For $e(t) = c  \e t^p$, $g\equiv 1$, $f, h\equiv \e c $, $\phi(t) = t$, $w(t) = t^\frac{p-1}{p}$ we calculate 
the coefficients of Proposition \ref{prop: Pachpatte}
\begin{align*}
a(t) &:= 1+ \e c \int_0^t \exp(\e c (t-s)) ds 
= \exp(\e c t)
\end{align*}
and in the limit of $\e t$ being small $(\e t\ll 1)$ we have  
\begin{align*}
\e \int_0^t a(s)^\frac{p-1}{p} ds 
&= \e t \Big(\frac{\exp(c \frac{p-1}{p} \e t) -1}{c \frac{p-1}{p} \e t}\Big)\lqq_{\e t\ll 1} 2 \e t.
\end{align*}
Applying the change of parameter $r=\e s$ it follows that
\begin{align*}
A(t) & :=  \int_0^t  \exp(\e c \frac{p-1}{p} s) (e(s))^\frac{p-1}{p} ds =  \int_0^t \exp( c \frac{p-1}{p} \e s ) (c \e s^p)^\frac{p-1}{p} ds \\
&=  \e^\frac{p-1}{p}\int_0^{\e t} \exp( c \frac{p-1}{p} r ) c^\frac{p-1}{p}\left(\frac{r}{\e}\right)^{p-1} \frac{dr}{\e} 
\lqq t \frac{1}{\e^p t}\int_0^{\e t} \exp( c \frac{p-1}{p} r ) (c r)^\frac{p-1}{p} dr \\
&\lqq_{\e t \ll 1} 2 t \exp( c \frac{p-1}{p} \e t ) (c \e t)^\frac{p-1}{p} \lqq C_1 t \exp( c \frac{p-1}{p} \e t ) (\e t)^\frac{p-1}{p}.
\end{align*}
Finally, we obtain
\[
F(t) := \int_{0}^t s^{-\frac{p-1}{p}} ds = p r^\frac{1}{p} \qquad \mbox{ and }\quad F^{-1}(t) := \frac{t^p}{p^p}.
\]
In the sequel we follow the proof of Theorem 2.4.2 in Pachpatte \cite{Pa} 
and define the continuous, positive, non-decreasing function 
\[
n(t) := e(t) + \phi\Big(\int_0^t h(s) w(u(s)) ds\Big) = e(t) + \e c \int_0^t h(s) u(s)^\frac{p-1}{p} ds, \qquad t\gqq 0,  
\]
such that inequality (\ref{eq: nonlinear comparison}) can be restated as 
\begin{align*}
u(t) \lqq n(t) + g(t) \int_0^t f(s) u(s) ds = e(t) + \e c \int_0^t u(s) ds . 
\end{align*}
It is well-known, see for instance \cite{Am90}, that this 
integral estimate implies the following Gronwall-Bellmann 
inequality also in the case of $u$ being merely positive measurable. 
The main reason is that the integral is absolutely continuous with a bounded density. 
This result yields 
\[
u(t) \lqq a(t) n(t), \qquad t\gqq 0.  
\]
The remainder of the proof of Theorem 2.4.2 in \cite{Pa} 
does use the continuity of $u$ and remains intact. 
\end{proof}

\bigskip 
 
\section{Acknowledgements: } 
The author PHC would like to thank the Department of Mathematics of Brasilia University for providing support.
The authors MAH and PRR would like express his gratitude 
for the hospitality received at the Departameto de Matem\'atica 
at Universidade de Bras\'ilia and the IMECC at 
UNICAMP in February 2018. 
The funding of MAH by the FAPA project 
``Stochastic dynamics of L\'evy driven systems'' 
at the School of Science at Universidad de los Andes 
is greatly acknowledged. 
The author PRR is partially supported by Brazilian CNPq 
proc. nr. 305462/2016-4,  by FAPESP proc. nr.
2015/07278-0 and 2015/50122-0.
 
\bigskip

\end{document}